\newtheorem{obs} [subsection]{Remark}
\newtheorem{prop}[subsection]{Proposition}
\newtheorem{conj}[subsection]{Conjecture}
\newtheorem{teor}[subsection]{Theorem}
\newtheorem{lema}[subsection]{Lemma}
\newtheorem{cor} [subsection]{Corollary}
\newcommand{\Zng}{$\mathbb Z^n$-graded $S$-module}
\def\sdepth{\operatorname{sdepth}}
\def\qdepth{\operatorname{hdepth}}
\def\hdepth{\operatorname{hdepth}}
\def\depth{\operatorname{depth}}
\def\supp{\operatorname{supp}}
\def\deg{\operatorname{deg}}
\def\dd{\operatorname{d}}
\def\PP{\operatorname{P}}
\begin{document}
\selectlanguage{english}
\frenchspacing

\numberwithin{equation}{section}

\title{On the Hilbert depth of certain monomial ideals and applications}
\author{Silviu B\u al\u anescu$^1$ and Mircea Cimpoea\c s$^2$}
\date{}

\keywords{Stanley depth, Hilbert depth, depth, monomial ideal, path ideal}

\subjclass[2020]{05A18, 06A07, 13C15, 13P10, 13F20}

\footnotetext[1]{ \emph{Silviu B\u al\u anescu}, University Politehnica of Bucharest, Faculty of
Applied Sciences, 
Bucharest, 060042, E-mail: silviu.balanescu@stud.fsa.upb.ro}
\footnotetext[2]{ \emph{Mircea Cimpoea\c s}, University Politehnica of Bucharest, Faculty of
Applied Sciences, 
Bucharest, 060042, Romania and Simion Stoilow Institute of Mathematics, Research unit 5, P.O.Box 1-764,
Bucharest 014700, Romania, E-mail: mircea.cimpoeas@upb.ro,\;mircea.cimpoeas@imar.ro}

\begin{abstract}
We study the Stanley depth and the Hilbert depth for $I$ and $S/I$, where 
$I\subset S=K[x_1,\ldots,x_n]$ is the intersection of monomial prime ideals with disjoint sets of variables.
As an application, we obtain bounds for the Stanley depth of $I_{n,m}^t$ and $J_{n,m}^t$, 
where $I_{n,m}$ is the $m$-path ideal of the path graph of length $n$ and $J_{n,m}$ is the
the $m$-path ideal of the cycle graph of length $n$.
\end{abstract}

\maketitle

\section*{Introduction}

Let $K$ be a field and $S=K[x_1,\ldots,x_n]$ the polynomial ring over $K$.
Let $M$ be a \Zng. A \emph{Stanley decomposition} of $M$ is a direct sum $\mathcal D: M = \bigoplus_{i=1}^rm_i K[Z_i]$ as a 
$\mathbb Z^n$-graded $K$-vector space, where $m_i\in M$ is homogeneous with respect to $\mathbb Z^n$-grading, 
$Z_i\subset\{x_1,\ldots,x_n\}$ such that $m_i K[Z_i] = \{um_i:\; u\in K[Z_i] \}\subset M$ is a free $K[Z_i]$-submodule of $M$. 
We define $\sdepth(\mathcal D)=\min_{i=1,\ldots,r} |Z_i|$ and $\sdepth(M)=\max\{\sdepth(\mathcal D)|\;\mathcal D$ is 
a Stanley decomposition of $M\}$. The number $\sdepth(M)$ is called the \emph{Stanley depth} of $M$. 

Herzog, Vladoiu and Zheng show in \cite{hvz} that $\sdepth(M)$ can be computed in a finite number of steps if $M=I/J$, 
where $J\subset I\subset S$ are monomial ideals. 
In \cite{apel}, J.\ Apel restated a conjecture firstly given by Stanley in 
\cite{stan}, namely that $\sdepth(M)\geq\depth(M)$ for any \Zng $\;M$. This conjecture proves to be false, in general, for 
$M=S/I$ and $M=J/I$, where $0\neq I\subset J\subset S$ are monomial ideals, see \cite{duval}. For a friendly introduction in the 
thematic of Stanley depth, we refer the reader \cite{her}.

Let $M$ be a finitely generated graded $S$-module. The Hilbert depth of $M$, denoted by $\hdepth(M)$, is the maximal depth of a finitely generated 
graded $S$-module $N$ with the same Hilbert series as $M$. In \cite{lucrare2} we introduced a new method to compute the Hilbert depth of a 
quotient $J/I$ of two squarefree monomial ideals $I\subset J\subset S$; see Section $1$.

In Section $2$ we consider the edge ideal of a complete bipartite graph, that is
$$I:=(x_1,\ldots,x_n)\cap(x_{n+1},\ldots,x_{n+m})\subset S:=K[x_1,\ldots,x_{n+m}],$$
and we study the Stanley depth and the Hilbert depth of $I$ and $S/I$. 

Assume $m\leq n$. In Proposition \ref{p21} we show that
$$m \geq \sdepth(S/I)\geq \min\{m,\left\lceil \frac{n}{2} \right\rceil\}.$$
Also, in Theorem \ref{t21} we prove that
$$\qdepth(S/I)\leq \left\lfloor n+m+\frac{1}{2}-\sqrt{2mn+\frac{1}{4}} \right\rfloor.$$
In particular, we note that $\qdepth(S/I)<m$ if and only if $n\leq 2m-2$.

In Theorem \ref{t22} we prove that 
$$\qdepth(I)=\sdepth(I)=\left\lceil \frac{m}{2} \right\rceil + \left\lceil \frac{n}{2} \right\rceil,$$
if $n$ and $m$ are not both even. Also, we prove that if $n=2s$ and $m=2t$ then
$$t+s\leq \sdepth(I)\leq \qdepth(S/I) = t+s+1.$$
In particular, we have $\qdepth(I)=\left\lfloor \frac{n+m+2}{2} \right\rfloor$ for any $n\geq m\geq 1$.

In Section $3$ we consider a generalization of the ideal from the previous section, namely
$$I:=I_{n_1,\ldots,n_r}:=(x_1,\ldots,x_{n_1})\cap (x_{n_1+1},\ldots,x_{n_1+n_2})\cap \cdots \cap (x_{n_1+\cdots+n_{r-1}+1},\ldots,x_N)\subset S,$$
where $N=n_1+\cdots+n_r$ and $S=K[x_1,\ldots,x_N]$. In Theorem \ref{t31} we prove that
$$ \left\lfloor \frac{N+r}{2} \right\rfloor \geq \qdepth(I) \geq \sdepth(I) \geq \left\lceil \frac{n_1}{2} \right\rceil
+ \cdots + \left\lceil \frac{n_r}{2} \right\rceil.$$
Also, we conjecture that $$\qdepth(I)=\left\lfloor \frac{N+r}{2} \right\rfloor.$$
This formula holds for $r=2$ and if $r\geq 3$ and at most one of the numbers $n_1,\ldots,n_r$ is even.
In Proposition \ref{percolati} we characterize $\hdepth(S/I)$ and $\hdepth(I)$ in combinatorial terms.
In Proposition \ref{metal} we show that
$$\qdepth(S/I)\leq \min\{d\geq r\;:\; \binom{N-d+r-1}{r} < n_1n_2\cdots n_r\}-1.$$

Proposition \ref{betard} yields us to conjecture that
$$\qdepth(S/I) \approx N - \left\lceil \sqrt[r]{r!n_1n_2\cdots n_r} \right\rceil.$$ 

Let $n>m\geq 2$ and $t\geq 1$ be some integers.
In Section $4$ we apply the results from Section $3$ in order to obtain sharper bounds for the Stanley depth
 of $I_{n,m}^t$ and $J_{n,m}^t$, where 
$$I_{n,m}=(x_1x_2\cdots x_m,\;x_{2}x_3\cdots x_{m+1},\;\ldots,x_{n-m+1}\cdots x_n)\subset S:=K[x_1,\ldots,x_n],$$
is the $m$-path ideal associated to path graph of length $n$ and 
$$J_{n,m}=I_{n,m}+(x_{n-m+2}\cdots x_nx_1,\ldots,x_nx_1\cdots x_{m-1})\subset S$$
is the $m$-path ideal associated to the cycle graph of length $n$.

In Theorem \ref{t41} we show that 
$$\sdepth(I_{n,m}^t)\leq \min\{ n-\left\lceil \frac{t_0}{2} \right\rceil, n-\left\lfloor \frac{n-t_0+1}{m+1} \right\rfloor+1\},$$
where $t_0=\min\{t,n-m\}$. In Theorem \ref{t42} we show that
$$\sdepth(J_{n,m}^t)\leq \left\lfloor \frac{n+d}{2} \right\rfloor-1,$$
for any $t\geq n-1$, where $d=\gcd(n,m)$.

\section{Preliminaries}

First, we fix some notations and we recall the main result of \cite{lucrare2}. 

We denote $[n]:=\{1,2,\ldots,n\}$ and $S:=K[x_1,\ldots,x_n]$. 

For a subset $C\subset [n]$, we denote $x_C:=\prod_{j\in C}x_j\in S$.

For two subsets $C\subset D\subset [n]$, we denote $[C,D]:=\{A\subset [n]\;:\;C\subset A\subset D\}$,
      and we call it the \emph{interval} bounded by $C$ and $D$.

Let $I\subset J\subset S$ be two square free monomial ideals. We let:
$$\PP_{J/I}:=\{C\subset [n]\;:\;x_C\in J\setminus I\} \subset 2^{[n]}.$$
A partition of $P_{J/I}$ is a decomposition: $$\mathcal P:\;\PP_{J/I}=\bigcup_{i=1}^r [C_i,D_i],$$
into disjoint intervals.

If $\mathcal P$ is a partition of $\PP_{J/I}$, we let $\sdepth(\mathcal P):=\min_{i=1}^r |D_i|$.
The Stanley depth of $P_{J/I}$ is 
      $$\sdepth(P_{J/I}):=\max\{\sdepth(\mathcal P)\;:\;\mathcal P\text{ is a partition of }\PP_{J/I}\}.$$
Herzog, Vl\u adoiu and Zheng proved in \cite{hvz} that: $$\sdepth(J/I)=\sdepth(\PP_{J/I}).$$
Let $\PP:=\PP_{J/I}$, where $I\subset J\subset S$ are square-free monomial ideals. For any $0\leq k\leq n$, we denote:
$$\PP_k:=\{A\in \PP\;:\;|A|=k\}\text{ and }\alpha_k(J/I)=\alpha_k(\PP)=|\PP_k|.$$
For all $0\leq d\leq n$ and $0\leq k\leq d$, we consider the integers
\begin{equation}\label{betak}
  \beta_k^d(J/I):=\sum_{j=0}^k (-1)^{k-j} \binom{d-j}{k-j} \alpha_j(J/I).
\end{equation}
From \eqref{betak} we can easily deduce that
\begin{equation}\label{alfak}
  \alpha_k(J/I)=\sum_{j=0}^k \binom{d-j}{k-j} \beta_k^d(J/I),\text{ for all }0\leq k\leq d.
\end{equation}
Also, we have that
\begin{equation}\label{betak2}
\beta_k^d(J/I) = \alpha_k(J/I) - \binom{d}{k}\beta_0^d(J/I)-\binom{d-1}{k-1}\beta_1^d(J/I)-\cdots-\binom{d-k+1}{1}\beta_{k-1}^d(J/I).
\end{equation}

\begin{teor}(\cite[Theorem 2.4]{lucrare2})\label{d1}
With the above notations, the \emph{Hilbert depth} of $J/I$ is
$$\qdepth(J/I):=\max\{d\;:\;\beta_k^d(J/I) \geq 0\text{ for all }0\leq k\leq d\}.$$
\end{teor}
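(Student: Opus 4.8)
The plan is to translate the statement into a question about the Hilbert series of $M:=J/I$ and then to recognize the numbers $\beta_k^d(J/I)$ as the coefficients of $(1-t)^d H_M(t)$. First I would compute $H_M(t)$ explicitly in terms of the $\alpha_k$. For squarefree ideals $I\subset J$, membership of a monomial $x^a$ in $J$ (resp.\ in $I$) depends only on its support, so $x^a\in J\setminus I$ if and only if $\supp(x^a)\in\PP_{J/I}$. Grouping the monomial $K$-basis of $M$ according to support, a fixed support $C$ with $|C|=k$ contributes the monomials $\prod_{i\in C}x_i^{a_i}$ with all $a_i\ge 1$, whose generating function is $\left(\frac{t}{1-t}\right)^{k}$. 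Summing over $C\in\PP_{J/I}$ and collecting by $k=|C|$ yields
$$H_M(t)=\sum_{k=0}^{n}\alpha_k(J/I)\,\frac{t^k}{(1-t)^k}.$$

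Next I would invoke the characterization of Hilbert depth through Hilbert series: a finitely generated graded $S$-module of depth $\ge d$ with the same Hilbert series as $M$ exists if and only if the power series $(1-t)^d H_M(t)$ has only non-negative coefficients. One direction is immediate, since if $N$ has depth $\ge d$ a regular sequence of $d$ linear forms gives $H_N(t)=H_{\bar N}(t)/(1-t)^d$ with $\bar N$ a module over a polynomial ring in $n-d$ variables, so that $(1-t)^d H_N(t)=H_{\bar N}(t)$ is a Hilbert function and hence has non-negative coefficients. For the converse I would construct, from the non-negative series $(1-t)^d H_M(t)=Q_M(t)/(1-t)^{n-d}$, a finitely generated module $\bar N$ over $K[y_1,\dots,y_{n-d}]$ realizing it, and set $N:=\bar N\otimes_K K[x_1,\dots,x_d]$, which is free over $K[x_1,\dots,x_d]$, hence of depth $\ge d$, and has Hilbert series $H_M(t)$. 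This realizability step is the main obstacle, as it requires that every rational series $Q_M(t)/(1-t)^{n-d}$ with non-negative coefficients actually arises as a Hilbert function.

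Finally I would expand $(1-t)^d H_M(t)=\sum_{k=0}^{n}\alpha_k t^k(1-t)^{d-k}$ and read off its coefficients. A direct computation shows that for $0\le k\le d$ the coefficient of $t^k$ is exactly $\beta_k^d(J/I)$ as defined in \eqref{betak}. Splitting the expansion into the terms with $k\le d$, which form a polynomial of degree $\le d$, and the terms with $k>d$, each of the form $\alpha_k t^k/(1-t)^{k-d}$ and therefore having non-negative coefficients supported in degrees $\ge d+1$, shows that the coefficient of $t^m$ is automatically non-negative for every $m>d$. Consequently $(1-t)^d H_M(t)$ has non-negative coefficients if and only if $\beta_k^d(J/I)\ge 0$ for all $0\le k\le d$, and combining this with the criterion above gives
$$\hdepth(J/I)=\max\{d\;:\;\beta_k^d(J/I)\ge 0\text{ for all }0\le k\le d\},$$
as claimed.
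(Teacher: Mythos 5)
The paper does not actually prove this statement: it is quoted verbatim from \cite[Theorem 2.4]{lucrare2}, so there is no internal proof to compare yours against. That said, your outline follows what is essentially the standard (and, as far as I can tell, the actual) route to the result, and the parts you carry out in detail are correct: for squarefree $I\subset J$ a monomial lies in $J\setminus I$ exactly when its support does, which gives $H_{J/I}(t)=\sum_k \alpha_k(J/I)\,t^k/(1-t)^k$; the coefficient of $t^k$ in $(1-t)^dH_{J/I}(t)$ for $0\le k\le d$ is precisely $\beta_k^d(J/I)$ as in \eqref{betak}; and the coefficients in degrees above $d$ are automatically non-negative because the only surviving contributions there come from the terms $\alpha_k t^k/(1-t)^{k-d}$ with $k>d$, which have non-negative coefficients.

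The one genuine gap is the step you yourself flag: the identity $\hdepth(M)=\max\{d\;:\;(1-t)^dH_M(t)\text{ has non-negative coefficients}\}$, whose hard direction is the realizability of a non-negative admissible series as the Hilbert series of a finitely generated graded module of depth at least $d$. This cannot be dispatched with ``I would construct $\bar N$'': it is a theorem of Uliczka (\emph{Remarks on Hilbert series of graded modules over polynomial rings}, Manuscripta Math.\ \textbf{132} (2010), 159--168), proved by showing that every Laurent series of the form $Q(t)/(1-t)^{s}$ with non-negative coefficients admits a positive decomposition $\sum_{j=0}^{s} Q_j(t)/(1-t)^j$ with all $Q_j$ having non-negative coefficients, each summand being the Hilbert series of a shifted free module over a polynomial subring. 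If you invoke that result (or reprove the positive-decomposition lemma), your argument is complete; without it, the inequality $\hdepth(J/I)\ge\max\{d\;:\;\beta_k^d(J/I)\ge 0\text{ for all }0\le k\le d\}$ remains unproved, while the reverse inequality and all of the combinatorial bookkeeping in your proposal are fine.
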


As a basic property of the Hilbert depth, we state the following:

\begin{prop}\label{p1}
Let $I\subset J\subset S$ be two square-free monomial ideals. Then $$\sdepth(J/I)\leq \qdepth(J/I).$$
\end{prop}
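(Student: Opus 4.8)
The plan is to translate the inequality into the combinatorial language of intervals and $\beta$-numbers, and then reduce everything to a computation for a single interval. Fix a partition $\mathcal P:\;\PP_{J/I}=\bigcup_{i=1}^r[C_i,D_i]$ that realizes the Stanley depth, i.e. with $\sdepth(\mathcal P)=\sdepth(\PP_{J/I})=\sdepth(J/I)$, and set $d:=\min_{i}|D_i|$, so that $|D_i|\geq d$ for every $i$. By the Herzog--Vl\u adoiu--Zheng identity $\sdepth(J/I)=\sdepth(\PP_{J/I})$ and by Theorem \ref{d1}, it suffices to prove that $\beta_k^d(J/I)\geq 0$ for all $0\leq k\leq d$; this yields $\qdepth(J/I)\geq d=\sdepth(J/I)$.

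The first key step is additivity. Since the intervals are pairwise disjoint and cover $\PP_{J/I}$, the level counts split as $\alpha_k(J/I)=\sum_{i=1}^r\alpha_k^{(i)}$, where $\alpha_k^{(i)}:=|\{A\in[C_i,D_i]\;:\;|A|=k\}|=\binom{|D_i|-|C_i|}{k-|C_i|}$ counts the $k$-subsets lying between $C_i$ and $D_i$. Because the defining formula \eqref{betak} is linear in the $\alpha_j$, this passes to the $\beta$-numbers: $\beta_k^d(J/I)=\sum_{i=1}^r\beta_k^{d,(i)}$, where $\beta_k^{d,(i)}$ is the quantity computed from the numbers $\alpha_j^{(i)}$. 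Hence it is enough to show that each single-interval contribution satisfies $\beta_k^{d,(i)}\geq 0$ for all $0\leq k\leq d$, under the hypothesis $|D_i|\geq d$.

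For a single interval $[C,D]$ with $c:=|C|$ and $e:=|D|$, I would carry out the computation via generating functions. Writing $\sum_k\alpha_k t^k=t^c(1+t)^{e-c}$ and using $\sum_k\binom{d-j}{k-j}t^k=t^j(1+t)^{d-j}$, the inversion formula \eqref{alfak} gives $\sum_k\alpha_k t^k=(1+t)^d\sum_j\beta_j^d\bigl(t/(1+t)\bigr)^j$. The substitution $u=t/(1+t)$, equivalently $t=u/(1-u)$ and $1+t=1/(1-u)$, then collapses this to the clean identity $\sum_j\beta_j^d u^j=u^c(1-u)^{d-e}$. The decisive point is that the hypothesis $e\geq d$ makes the exponent $d-e$ nonpositive, so $(1-u)^{d-e}=(1-u)^{-(e-d)}$ expands with nonnegative coefficients; reading off the coefficient of $u^k$ yields $\beta_k^d=\binom{e-d+k-c-1}{k-c}\geq 0$ for $k\geq c$ and $\beta_k^d=0$ for $k<c$. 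Summing over the intervals of $\mathcal P$ then gives $\beta_k^d(J/I)\geq 0$ for all $0\leq k\leq d$, completing the argument.

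I expect the main obstacle to be precisely this single-interval identity: getting the substitution $u=t/(1+t)$ and the exponent bookkeeping exactly right, and then justifying the sign of the coefficients via the binomial series of $(1-u)^{-(e-d)}$. Once the formula $\sum_j\beta_j^d u^j=u^c(1-u)^{d-e}$ is established, everything else---the additivity over intervals and the final appeal to Theorem \ref{d1}---is routine.
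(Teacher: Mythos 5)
Your argument is correct, but it is worth noting that the paper itself offers no proof of Proposition \ref{p1}: it is stated as a ``basic property'' and follows in one line from the module-theoretic definition of Hilbert depth given in the introduction. Indeed, a Stanley decomposition $\mathcal D: M=\bigoplus_{i=1}^r m_iK[Z_i]$ produces the graded module $N=\bigoplus_{i=1}^r K[Z_i](-\deg m_i)$, which has the same Hilbert series as $M$ and depth $\min_i|Z_i|=\sdepth(\mathcal D)$, whence $\qdepth(M)\geq\sdepth(M)$. You instead work entirely inside the combinatorial framework of Theorem \ref{d1}: you take an optimal interval partition, observe that $\beta_k^d$ is linear in the $\alpha_j$ and hence additive over the intervals, and verify nonnegativity interval by interval via the identity $\beta_k^d=\binom{e-d+k-c-1}{k-c}$ for a single interval with $c=|C|\leq k$ and $e=|D|\geq d$ (which also follows directly from Lemma \ref{magic} after the shift $j=c+i$). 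This is a genuinely different and self-contained route; what it buys is a proof that never leaves the lattice of squarefree monomials, at the cost of redoing part of the work that \cite{lucrare2} packages into Theorem \ref{d1}. One small point to tighten: your ``clean identity'' $\sum_j\beta_j^d u^j=u^c(1-u)^{d-e}$ cannot hold literally when $e>d$, since the left side is a polynomial of degree at most $d$ in $u$ while the right side is an infinite series; the inversion \eqref{alfak} only determines $\alpha_k$ from the $\beta_j^d$ for $k\leq d$, so the identity is a congruence modulo $u^{d+1}$. Since you only read off coefficients of $u^k$ with $k\leq d$, this does not affect the conclusion, but it should be stated.
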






\section{Edge ideal of a complete bipartite graph}

Let $n$ and $m$ be two positive integers. We let $S=K[x_1,x_2,\ldots,x_{n+m}]$ and we consider 
the square free monomial ideal:
$$I:=(x_1,\ldots,x_n)\cap (x_{n+1},\ldots,x_{n+m}) \subset S.$$
Our aim is to study the Stanley depth and the Hilbert depth of $I$ and $S/I$.

As usual, given a positive integer $k$, we denote $[k]:=\{1,2,\ldots,k\}$.

\begin{obs}\rm
Let $K_{n,m}=(V,E)$ be the complete bipartite graph, that is $V=V'\cup V''$, where 
$V'=\{1,\ldots,n\}$, $V''=\{n+1,\ldots,n+m\}$ and
$E=\{\{i,j\}\;:\;i\in [n],\;j-n\in [m]\}$. Note that 
$I=(x_ix_{n+j}\;:\;i\in [n],\;j\in [m])$ is the edge ideal of $K_{n,m}$.

Also, we mention that $\depth(S/I)=1$, which can be easily checked.
\end{obs}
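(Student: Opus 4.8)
The plan is to establish the two inequalities $\depth(S/I)\geq 1$ and $\depth(S/I)\leq 1$ by exploiting the decomposition $I=P_1\cap P_2$, where $P_1=(x_1,\ldots,x_n)$ and $P_2=(x_{n+1},\ldots,x_{n+m})$ are the two minimal (in fact, associated) primes of $I$. The key observation is that $P_1+P_2=\me=(x_1,\ldots,x_{n+m})$, the graded maximal ideal, so that the Mayer--Vietoris short exact sequence of graded $S$-modules
$$0\longrightarrow S/I\longrightarrow S/P_1\oplus S/P_2\longrightarrow S/\me\longrightarrow 0$$
becomes available, with the first map induced by $f\mapsto(f,f)$ and the second by $(g,h)\mapsto g-h$.

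Next I would compute the depths of the three modules other than $S/I$. Since $S/P_1\cong K[x_{n+1},\ldots,x_{n+m}]$ and $S/P_2\cong K[x_1,\ldots,x_n]$ are polynomial rings, they are Cohen--Macaulay of depth $m$ and $n$ respectively, hence $\depth(S/P_1\oplus S/P_2)=\min\{n,m\}\geq 1$; on the other hand $S/\me\cong K$ has depth $0$.

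Finally I would feed these into the standard depth lemma for a short exact sequence $0\to A\to B\to C\to 0$. The inequality $\depth A\geq\min\{\depth B,\depth C+1\}$ gives $\depth(S/I)\geq\min\{\min\{n,m\},1\}=1$, while the inequality $\depth C\geq\min\{\depth A-1,\depth B\}$ yields $0\geq\min\{\depth(S/I)-1,\min\{n,m\}\}$; since $\min\{n,m\}\geq 1$, this forces $\depth(S/I)-1\leq 0$, that is $\depth(S/I)\leq 1$. Combining the two bounds gives $\depth(S/I)=1$.

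There is essentially no serious obstacle here, which matches the authors' remark that the claim is easily checked; the only points requiring a little care are verifying exactness of the Mayer--Vietoris sequence (equivalently, that $I=P_1\cap P_2$ and $\me=P_1+P_2$) and applying the depth lemma inequalities in the correct direction, together with the harmless hypotheses $n,m\geq 1$ that keep $\min\{n,m\}$ strictly positive.
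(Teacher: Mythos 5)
Your argument is correct and complete. The paper itself offers no proof here --- the remark simply asserts that $\depth(S/I)=1$ "can be easily checked" --- so there is no authorial argument to compare against; your route via the Mayer--Vietoris sequence $0\to S/(P_1\cap P_2)\to S/P_1\oplus S/P_2\to S/(P_1+P_2)\to 0$ together with the depth lemma is the standard way to supply the missing verification, and every step (the exactness, the depths $m$, $n$, $0$ of the outer terms, and the two directions of the depth lemma) checks out under the standing hypothesis $n\geq m\geq 1$.
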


\begin{prop}\label{p21}
Let $n\geq m\geq 1$ be two integers. Then:
\begin{enumerate}
\item[(1)] $m \geq \sdepth(S/I)\geq \min\{m,\lceil \frac{n}{2} \rceil\}$.
\item[(2)] $m + \lceil \frac{n}{2} \rceil \geq \sdepth(I) \geq \lceil \frac{n}{2} \rceil + \lceil \frac{m}{2} \rceil$.
\item[(3)] If $n\geq 2m-1$ then $\sdepth(S/I)=m$.
\end{enumerate}
 \end{prop}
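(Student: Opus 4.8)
Throughout write $A=\{1,\dots,n\}$ and $B=\{n+1,\dots,n+m\}$, and recall that $x_C\in I$ exactly when $C\cap A\neq\emptyset$ and $C\cap B\neq\emptyset$. Hence $\PP_{S/I}=\{C\subseteq A\}\cup\{C\subseteq B\}$, two Boolean lattices glued along $\emptyset$, while $\PP_{I}=\{C:\ C\cap A\neq\emptyset,\ C\cap B\neq\emptyset\}$. The single external input I will use is the classical value $\sdepth(\mathfrak m_A)=\lceil n/2\rceil$ and $\sdepth(\mathfrak m_B)=\lceil m/2\rceil$ for the graded maximal ideals $\mathfrak m_A=(x_1,\dots,x_n)$ and $\mathfrak m_B=(x_{n+1},\dots,x_{n+m})$; in poset terms, $2^A\setminus\{\emptyset\}$ admits an interval partition with all tops of size $\ge\lceil n/2\rceil$, and no partition does better (similarly for $B$). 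For (1) the upper bound is immediate: $B$ is maximal in $\PP_{S/I}$, and any $C\in\PP_{S/I}$ with $B\subseteq C$ satisfies $C\subseteq B$, so in every partition the interval containing $B$ has top exactly $B$, of size $m$; thus $\sdepth(\mathcal P)\le m$. For the lower bound I would exhibit the partition made of the single interval $[\emptyset,B]$ together with an optimal interval partition of $2^A\setminus\{\emptyset\}$; these are disjoint and cover $\PP_{S/I}$, and the minimal top has size $\min\{m,\lceil n/2\rceil\}$.

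For the lower bound in (2), the plan is to use that $C\mapsto(C\cap A,\,C\cap B)$ is a poset isomorphism $\PP_{I}\cong\bigl(2^A\setminus\{\emptyset\}\bigr)\times\bigl(2^B\setminus\{\emptyset\}\bigr)$. Taking optimal interval partitions of the two factors and forming all products $[C_i\cup E_j,\,D_i\cup F_j]$ yields an interval partition of $\PP_{I}$ whose tops have size $|D_i|+|F_j|\ge\lceil n/2\rceil+\lceil m/2\rceil$, giving $\sdepth(I)\ge\lceil n/2\rceil+\lceil m/2\rceil$.

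The real work is the upper bound in (2), and I expect this to be the main obstacle, since it must convert the known depth of $\mathfrak m_A$ into a statement about the product poset. Consider the filter $\mathcal F=\{C\in\PP_{I}:\ B\subseteq C\}$, which is upward closed in $\PP_{I}$ and isomorphic to $2^A\setminus\{\emptyset\}$ via $C\mapsto C\setminus B$. Given any interval partition $\mathcal P=\bigcup_i[C_i,D_i]$, every interval meeting $\mathcal F$ must satisfy $B\subseteq D_i$ (as $\mathcal F$ is a filter and $D_i$ is the top), and its trace on $\mathcal F$ is the interval $[C_i\cup B,\,D_i]$. These traces partition $\mathcal F$, and under the isomorphism their tops $D_i$ correspond to subsets of $A$ of size $|D_i|-m$. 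Since this induced partition of $2^A\setminus\{\emptyset\}$ cannot beat $\sdepth(\mathfrak m_A)=\lceil n/2\rceil$, its minimal top has size $\le\lceil n/2\rceil$, so some interval meeting $\mathcal F$ has $|D_i|\le m+\lceil n/2\rceil$. Hence $\sdepth(\mathcal P)\le m+\lceil n/2\rceil$ for every $\mathcal P$, proving $\sdepth(I)\le m+\lceil n/2\rceil$.

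Finally, (3) follows directly from (1): if $n\ge 2m-1$ then $\lceil n/2\rceil\ge m$, so $\min\{m,\lceil n/2\rceil\}=m$ and the two bounds of (1) collapse to $\sdepth(S/I)=m$.
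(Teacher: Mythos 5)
Your proof is correct. It follows the same strategic outline as the paper's proof --- the four key moves are identical --- but you execute everything directly on the poset $\PP_{J/I}$ of Herzog--Vl\u adoiu--Zheng, whereas the paper assembles the result from citations. Concretely: your observation that $B$ is a maximal element of $\PP_{S/I}$, forcing a top of size $m$, is the combinatorial content of the paper's inequality $\sdepth(S/I)\leq\sdepth(S/I'S)=m$ from \cite[Theorem 1.3(2)]{mirci}; your partition $[\emptyset,B]\cup(\text{optimal partition of }2^A\setminus\{\emptyset\})$ is the shadow of the lower bound $\min\{\sdepth(S/I'S),\sdepth_{S''}(S''/I'')+\sdepth_{S'}(I')\}$ from the same theorem; your product-of-interval-partitions argument for $\sdepth(I)\geq\lceil n/2\rceil+\lceil m/2\rceil$ is what the paper imports from \cite[Lemma 1.1]{apop}; and your filter $\mathcal F=\{C\in\PP_I:B\subseteq C\}$ with the trace argument is a hands-on proof, in this special case, of $\sdepth(I)\leq\sdepth(I:u)$ (\cite[Proposition 1.3]{pop}) combined with $\sdepth(I'S)=m+\sdepth_{S'}(I')$ (\cite[Lemma 3.6]{hvz}) --- the paper colons out by $x_{n+1}$ rather than by $x_B$, but both give $(I:u)=I'S$ and the same bound $m+\lceil n/2\rceil$. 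Both proofs ultimately rest on the single nontrivial external input $\sdepth(\mathfrak m)=\lceil n/2\rceil$ of Bir\'o et al., and you correctly use both directions of that result (the upper-bound direction is essential in your trace argument). What your version buys is self-containedness and transparency about where each bound comes from; what the paper's version buys is brevity and immediate generalizability (the cited lemmas are reused verbatim in Section 3 for intersections of $r$ prime ideals, where your ad hoc poset arguments would need to be redone).
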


\begin{proof}
(1) Since $I=I'S\cap I''S$, where $I'=(x_1,\ldots,x_n)\subset S'=K[x_1,\ldots,x_n]$ and \linebreak
$I''=(x_{n+1},\ldots,x_{n+m})\subset S''=K[x_{n+1},\ldots,x_{n+m}]$,
from \cite[Theorem 1.3(2)]{mirci} it follows that
$$\sdepth(S/I'S)\geq \sdepth(S/I)\geq \min\{\sdepth(S/I'S),\;\sdepth_{S''}(S''/I'')+\sdepth_{S'}(I')\}.$$
As $S/I'S\cong S''$, we have that $\sdepth(S/I'S)=m$. 

Also, $S''/I''\cong K$, so
$\sdepth_{S''}(S''/I'')=0$. 

Finally, $\sdepth_{S'}(I')=\lceil \frac{n}{2} \rceil$, see \cite[Theorem 2.2]{biro}.

(2) Since $(I:x_{n+1})=I'S$, from \cite[Proposition 1.3]{pop} (see arXiv version), \cite[Theorem 2.2]{biro}
and \cite[Lemma 3.6]{hvz} we have
$$\sdepth(I) \leq \sdepth(I:x_{n+1}) = \sdepth(I'S) = m+\sdepth_{S'}(I')=m+\left\lceil \frac{n}{2}\right\rceil.$$
The other inequality follows from \cite[Lemma 1.1]{apop} and \cite[Theorem 2.2]{biro}.

(3) If $n\geq 2m-1$ then $\lceil \frac{n}{2} \rceil\geq m$, hence the result follows from (1).
\end{proof}

\begin{lema}\label{alfaa}
Let $n\geq m\geq 1$ be two integers and $N:=n+m$. We have that
\begin{enumerate}
\item[(1)] $\alpha_k(I)=\begin{cases} 0,& 0\leq k\leq 1 \\ \sum_{j=1}^{k-1} \binom{n}{j}\binom{m}{k-j}, &2\leq k\leq N \end{cases}$.
\item[(2)] $\alpha_k(I)=\binom{N}{k}-\binom{n}{k}-\binom{m}{k} + \delta_{k0}$, for all $0\leq k\leq N$.
\item[(3)] $\alpha_k(S/I)=\binom{n}{k}+\binom{m}{k} - \delta_{k0}$, for all $0\leq k\leq N$.
\end{enumerate}
\end{lema}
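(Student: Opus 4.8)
The plan is to reduce all three formulas to a direct enumeration of squarefree monomials, using the membership criterion for an intersection of monomial prime ideals. By convention the quantity $\alpha_k(I)$ counts the subsets $C\subset[N]$ with $|C|=k$ and $x_C\in I$, while $\alpha_k(S/I)$ counts those with $x_C\notin I$; these two families partition the $k$-subsets of $[N]$. First I would record the key dichotomy: since $x_C$ lies in a monomial prime ideal exactly when $C$ meets its set of variables, we have $x_C\in I=(x_1,\ldots,x_n)\cap(x_{n+1},\ldots,x_{n+m})$ if and only if $C\cap[n]\neq\emptyset$ and $C\cap\{n+1,\ldots,N\}\neq\emptyset$. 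Writing $C'=C\cap[n]$ and $C''=C\setminus[n]$, the condition $x_C\in I$ is precisely that both $C'$ and $C''$ are nonempty.

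For part (1) I would count by conditioning on $j:=|C'|$. Choosing $C'$ in $\binom{n}{j}$ ways and $C''$ in $\binom{m}{k-j}$ ways, and demanding both nonempty, forces $1\leq j\leq k-1$; summing over $j$ gives $\alpha_k(I)=\sum_{j=1}^{k-1}\binom{n}{j}\binom{m}{k-j}$, where the sum is empty, hence $0$, exactly when $k\leq 1$.

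For part (2) I would instead count the complement. A $k$-subset violates the membership criterion precisely when it is contained in $[n]$ (there are $\binom{n}{k}$ of these) or contained in $\{n+1,\ldots,N\}$ (there are $\binom{m}{k}$ of these); the only set lying in both families is the empty set, which occurs solely when $k=0$. Subtracting these from the total $\binom{N}{k}$ yields $\alpha_k(I)=\binom{N}{k}-\binom{n}{k}-\binom{m}{k}+\delta_{k0}$. As an internal check, the Vandermonde identity $\binom{N}{k}=\sum_{j=0}^{k}\binom{n}{j}\binom{m}{k-j}$ shows this agrees with part (1) once the $j=0$ and $j=k$ terms are removed.

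For part (3) I would use that the $k$-subsets with $x_C\in I$ and those with $x_C\notin I$ partition all $k$-subsets, so $\alpha_k(S/I)=\binom{N}{k}-\alpha_k(I)$; substituting the expression from part (2) gives $\alpha_k(S/I)=\binom{n}{k}+\binom{m}{k}-\delta_{k0}$, which is also the direct count carried out above. I do not expect a genuine obstacle in any of this: the argument is a routine counting exercise, and the only delicate point is the bookkeeping of the empty set in the case $k=0$, which is exactly what the Kronecker delta $\delta_{k0}$ is there to record.
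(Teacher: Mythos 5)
Your proof is correct and follows essentially the same route as the paper: part (1) by splitting a squarefree monomial of $I$ into its two nonempty factors supported on $\{x_1,\ldots,x_n\}$ and $\{x_{n+1},\ldots,x_N\}$, and part (3) by complementation. The only cosmetic difference is in part (2), where the paper deduces the formula from (1) via the Vandermonde identity, while you count the complement (the $k$-subsets contained entirely in one block) directly by inclusion--exclusion; both are one-line computations, and your $\delta_{k0}$ bookkeeping for the empty set is exactly right.
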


\begin{proof}
(1) Since $I$ is generated in degree $2$, we have $\alpha_0(I)=\alpha_1(I)=1$. Any squarefree monomial $u\in I$ with $\deg(u)=k\geq 2$
    can be written as $u=v\cdot w$, where $v\in S'=K[x_1,\ldots,x_n]$ and $w\in S''=K[x_{n+1},\ldots,x_N]$ are squarefree monomials. 
		Assume $\deg(v)=j$ with $1\leq j\leq k-1$. Then $\deg(w)=k-j$. Since there are $\binom{n}{j}$ squarefree monomials of degree $j$
		in $S'$ and $\binom{m}{k-j}$ squarefree monomials of degree $k-j$ in $S''$, we easily get the required conclusion.
		
(2)	For $k\leq 1$ the identity can be easily checked. Assume $k\geq 2$. From (1) and the well known combinatorial formula
    $$\sum_{j=0}^k \binom{n}{j}\binom{m}{k-j} = \binom{n+m}{k} =\binom{N}{k},$$
		we get the required conclusion.
		
(3) It follows immediately from (2).		

\end{proof}

\begin{lema}\label{magic}
For any integers $0\leq k\leq d$ and $n\geq 0$ we have that
$$ \sum_{j=0}^k (-1)^{k-j} \binom{d-j}{k-j}\binom{n}{j} = (-1)^k \binom{d-n}{k} = \binom{n-d+k-1}{k}.$$
\end{lema}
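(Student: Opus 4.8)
The plan is to reduce the alternating sum to a single instance of the Vandermonde convolution. First I would dispose of the second equality, which is purely formal: the upper-negation identity $\binom{a}{k}=(-1)^k\binom{k-a-1}{k}$ (valid for any integer $a$ and any $k\geq 0$) applied with $a=d-n$ gives $\binom{d-n}{k}=(-1)^k\binom{n-d+k-1}{k}$, so that $(-1)^k\binom{d-n}{k}=\binom{n-d+k-1}{k}$. Hence it suffices to show that the left-hand sum equals $\binom{n-d+k-1}{k}$.

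Next I would rewrite the summand so that the signs disappear. Applying the same upper-negation identity to $\binom{d-j}{k-j}$, with upper index $d-j$ and lower index $k-j$, yields
$$\binom{d-j}{k-j}=(-1)^{k-j}\binom{(k-j)-(d-j)-1}{k-j}=(-1)^{k-j}\binom{k-d-1}{k-j},$$
and therefore $(-1)^{k-j}\binom{d-j}{k-j}=\binom{k-d-1}{k-j}$, since $(-1)^{2(k-j)}=1$. Consequently the left-hand side collapses to the plain convolution
$$\sum_{j=0}^k (-1)^{k-j}\binom{d-j}{k-j}\binom{n}{j}=\sum_{j=0}^k \binom{n}{j}\binom{k-d-1}{k-j}.$$

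Finally I would invoke the Vandermonde convolution $\sum_{j=0}^k \binom{a}{j}\binom{b}{k-j}=\binom{a+b}{k}$ with $a=n$ and $b=k-d-1$ to obtain $\binom{n+k-d-1}{k}=\binom{n-d+k-1}{k}$, which is exactly the quantity identified in the first step. This would complete the argument.

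The one point that needs care, and the only real obstacle, is the legitimacy of the binomial manipulations when the upper indices $d-j$, $k-d-1$, or $d-n$ are negative. The cleanest way to justify both the upper-negation step and the Vandermonde step uniformly is to read every $\binom{x}{m}$ through the generalized definition $\binom{x}{m}=x(x-1)\cdots(x-m+1)/m!$ and to interpret the convolution as extracting the coefficient of $t^k$ in the product $(1+t)^n(1+t)^{k-d-1}=(1+t)^{n+k-d-1}$ of formal power series; the hypothesis $n\geq 0$ guarantees that $(1+t)^n$ is an honest polynomial, so the sum is finite and the coefficient comparison is valid for all integer values of $d$ and $k$. Once this convention is fixed, each step above is a one-line verification.
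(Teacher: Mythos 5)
Your proof is correct and follows essentially the same route as the paper: apply the upper-negation identity to absorb the sign $(-1)^{k-j}$ into $\binom{d-j}{k-j}$, turning it into $\binom{k-d-1}{k-j}$, and then conclude by the Chu--Vandermonde convolution; the second equality is the same upper negation applied to $\binom{d-n}{k}$. Your additional remark justifying the manipulations for negative upper indices via the generalized binomial coefficient and the coefficient of $t^k$ in $(1+t)^n(1+t)^{k-d-1}$ is a welcome clarification that the paper leaves implicit.
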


\begin{proof}
Using the identity $\binom{x}{k}=\binom{-x+k-1}{k}$ and the Chu–Vandermonde summation, we get
$$ \sum_{j=0}^k (-1)^{k-j} \binom{d-j}{k-j}\binom{n}{j} = \sum_{j=0}^k \binom{-d+k-1}{k-j}\binom{n}{j} =  \binom{n-d+k-1}{k},$$
as required.
\end{proof}

\begin{lema}\label{beta2}
Let $n\geq m\geq 1$ and $0\leq k \leq d\leq N:=n+m$ some integers. We have that
\begin{align*}
&(1)\;\beta_k^d(S/I) = \binom{n-d+k-1}{k}+\binom{m-d+k-1}{k}+(-1)^{k+1}\binom{d}{k},\\
&(2)\; \beta_k^d(I) =\binom{N-d+k-1}{k}-\binom{n-d+k-1}{k}-\binom{m-d+k-1}{k}+(-1)^{k}\binom{d}{k}.
\end{align*}
\end{lema}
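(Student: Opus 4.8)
The plan is to derive both formulas directly from the definition of $\beta_k^d$ in \eqref{betak} by substituting the values of $\alpha_j(S/I)$ and $\alpha_j(I)$ computed in Lemma \ref{alfaa}, and then collapsing the resulting sums using the combinatorial identity of Lemma \ref{magic}. The whole point is that $\beta_k^d$ is \emph{linear} in the sequence $(\alpha_j)_j$, so I can handle each binomial summand $\binom{n}{j}$, $\binom{m}{j}$ separately and assemble the answer term by term.

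For part (1), I start from $\alpha_j(S/I)=\binom{n}{j}+\binom{m}{j}-\delta_{j0}$ (Lemma \ref{alfaa}(3)) and plug it into $\beta_k^d(S/I)=\sum_{j=0}^k(-1)^{k-j}\binom{d-j}{k-j}\alpha_j(S/I)$. Splitting the sum into three pieces, the $\binom{n}{j}$ piece is exactly the left-hand side of Lemma \ref{magic}, so it contributes $\binom{n-d+k-1}{k}$; likewise the $\binom{m}{j}$ piece contributes $\binom{m-d+k-1}{k}$. The $-\delta_{j0}$ piece only survives at $j=0$, giving $-(-1)^{k}\binom{d}{k}=(-1)^{k+1}\binom{d}{k}$. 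Summing the three contributions yields precisely the stated formula for $\beta_k^d(S/I)$.

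For part (2), the fastest route is to use additivity of $\beta_k^d$ coming from the short exact sequence / the identity $\alpha_j(I)+\alpha_j(S/I)=\alpha_j(S)=\binom{N}{j}$ for $j\geq 1$ (with the $\delta_{j0}$ bookkeeping at $j=0$). Applying $\beta_k^d$ to $\binom{N}{j}$ via Lemma \ref{magic} gives $\binom{N-d+k-1}{k}$, so $\beta_k^d(I)=\binom{N-d+k-1}{k}-\beta_k^d(S/I)$. Substituting the formula from part (1) and simplifying $-(-1)^{k+1}\binom{d}{k}=(-1)^{k}\binom{d}{k}$ produces the claimed expression. Alternatively I could feed $\alpha_j(I)=\binom{N}{j}-\binom{n}{j}-\binom{m}{j}+\delta_{j0}$ (Lemma \ref{alfaa}(2)) directly into the definition and apply Lemma \ref{magic} four times; both methods are essentially the same computation.

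Honestly there is no serious obstacle here: the argument is a routine bookkeeping of signs and delta terms once Lemma \ref{magic} is in hand. The only thing to be careful about is the treatment of the $\delta_{k0}$/$\delta_{j0}$ correction terms, since the $j=0$ summand must be isolated before applying the identity (the identity itself absorbs the full range $0\leq j\leq k$, so one must not double-count the $\delta$ term). I would therefore write out the three-way split explicitly for part (1) to make the $(-1)^{k+1}\binom{d}{k}$ term transparent, and then present part (2) via the $\binom{N-d+k-1}{k}-\beta_k^d(S/I)$ shortcut to avoid repeating the calculation.
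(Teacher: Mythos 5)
Your proposal is correct and follows essentially the same route as the paper: substitute the $\alpha_j$ values from Lemma \ref{alfaa} into \eqref{betak}, collapse each binomial piece with Lemma \ref{magic}, and track the $\delta_{j0}$ term to produce $(-1)^{k+1}\binom{d}{k}$. Your shortcut for part (2) via $\beta_k^d(I)=\binom{N-d+k-1}{k}-\beta_k^d(S/I)$ is a harmless reorganization of the paper's "similar" direct computation, resting on the same identity.
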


\begin{proof}
(1) From \eqref{betak}, Lemma \ref{alfaa}(3) and Lemma \ref{magic} we have that
\begin{align*}
& \beta_k^d(S/I) = \sum_{j=1}^k (-1)^{k-j} \binom{d-j}{k-j} \binom{n}{j} + \sum_{j=1}^k (-1)^{k-j} \binom{d-j}{k-j} \binom{m}{j}
- (-1)^k\binom{d}{k} = \\
& = \binom{n-d+k-1}{k}+\binom{m-d+k-1}{k}+(-1)^{k+1}\binom{d}{k},
\end{align*}
as required.
(2) The proof is similar, using \eqref{betak}, Lemma \ref{alfaa}(2) and Lemma \ref{magic}.
\end{proof}

Note that, if $n\geq 2m-1$ then, according to Proposition \ref{p21}(3) and Proposition \ref{p1} we have
$\qdepth(S/I)\geq \sdepth(S/I) = m.$
Also, $\sdepth(S/I)\leq m$, for any $n\geq m$.

Hence, it is interesting to consider the case $n\leq 2m-2$, in order to find a better upper bound for $\sdepth(S/I)$.

\begin{teor}\label{t21}
Let $n\geq m\geq 1$ be two integers. Then
$$\sdepth(S/I)\leq \qdepth(S/I)\leq \left\lfloor n+m+\frac{1}{2}-\sqrt{2mn+\frac{1}{4}} \right\rfloor.$$
In particular, if $n\leq 2m-2$ then $\qdepth(S/I)<m$.
\end{teor}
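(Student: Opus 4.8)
The first inequality $\sdepth(S/I)\leq\qdepth(S/I)$ is exactly Proposition \ref{p1}, so the entire content is the upper bound for $\qdepth(S/I)$. Write $N=n+m$ and set $D:=\left\lfloor N+\frac12-\sqrt{2mn+\frac14}\right\rfloor$. By Theorem \ref{d1}, it suffices to exhibit, for every integer $d$ with $D+1\leq d\leq N$, an index $k$ with $\beta_k^d(S/I)<0$: this forces every such $d$ to violate the nonnegativity condition, hence no $d>D$ can satisfy it and $\qdepth(S/I)\leq D$. The plan is to use the single value $k=2$ throughout.

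The key computation is to evaluate $\beta_2^d(S/I)$. Plugging $k=2$ into Lemma \ref{beta2}(1) gives
$$\beta_2^d(S/I)=\binom{n-d+1}{2}+\binom{m-d+1}{2}-\binom{d}{2},$$
and expanding the (generalized) binomial coefficients turns $2\beta_2^d(S/I)$ into the quadratic polynomial $d^2-(2N+1)d+(n^2+m^2+N)$ in $d$. I would then compute its discriminant, which simplifies to $8mn+1$ after substituting $N^2=n^2+2mn+m^2$, so that the two roots are exactly $N+\frac12\pm\sqrt{2mn+\frac14}$. Denoting the smaller root by $r$ and the larger by $R$, one concludes that $\beta_2^d(S/I)<0$ precisely when $r<d<R$.

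To finish, I would check that the whole integer range $[D+1,N]$ lies strictly inside the open interval $(r,R)$. On the left, $D=\lfloor r\rfloor$ gives $r<D+1$ (also when $r\in\mathbb Z$, since then $D+1=r+1>r$). On the right, $R=N+\frac12+\sqrt{2mn+\frac14}>N$ because $m,n\geq1$. Hence $\beta_2^d(S/I)<0$ for every $d$ with $D+1\leq d\leq N$, which yields $\qdepth(S/I)\leq D$ as explained. For the last assertion I would show $D<m\iff r<m$; since $N-m=n$, the inequality $r<m$ reads $n+\frac12<\sqrt{2mn+\frac14}$, and squaring (both sides positive) reduces it to $n^2+n<2mn$, i.e.\ $n\leq 2m-2$.

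The only genuinely delicate point is the bookkeeping around the floor: one must be sure that the strict inequality $\beta_2^d(S/I)<0$ survives at the endpoint $d=D+1$ even in the degenerate case $r\in\mathbb Z$ (where $\beta_2^r=0$ sits exactly on the boundary), and that the range never overshoots $R$, which is precisely where the trivial bound $d\leq N<R$ enters. Everything else is the routine binomial expansion and the discriminant simplification, both of which are purely mechanical once Lemma \ref{beta2}(1) is in hand.
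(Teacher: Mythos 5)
Your proposal is correct and follows essentially the same route as the paper: both reduce the upper bound to the sign of $\beta_2^d(S/I)$, which Lemma \ref{beta2}(1) turns into a quadratic in $d$ with roots $n+m+\frac12\pm\sqrt{2mn+\frac14}$, and then invoke Theorem \ref{d1}. The only (cosmetic) difference is in the final assertion, where the paper argues via monotonicity of $\psi(x)=x+m+\frac12-\sqrt{2mx+\frac14}$ and the value $\psi(2m-1)=m$, while you solve the inequality $\lfloor r\rfloor<m\iff r<m\iff n\leq 2m-2$ directly; both are valid.
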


\begin{proof}
The first inequality follows from Proposition \ref{p1}.
We consider the quadratic function 
$$\varphi(t)=\frac{1}{2}t(t-1)-(n+m)t + \frac{1}{2}n(n+1)+\frac{1}{2}m(m+1).$$
Note that, according to Lemma \ref{beta2}(1), $\beta_2^d(S/I)=\varphi(d)$.

The roots of $\varphi(t)=0$ are $t_{1,2}=n+m+\frac{1}{2}\pm \sqrt{2mn+\frac{1}{4}}$ and 
therefore 
$$\varphi(t)<0\text{ if and only if }t\in \left(n+m+\frac{1}{2}-\sqrt{2mn+\frac{1}{4}},n+m+\frac{1}{2}+\sqrt{2mn+\frac{1}{4}} \right).$$
From the fact that $\beta_2^d(S/I)=\varphi(d)$ and the above, it follows that 
$$\beta_2^d(S/I)<0\text{ for }d \geq \left\lfloor n+m+\frac{1}{2}-\sqrt{2mn+\frac{1}{4}} \right\rfloor + 1.$$ From Theorem \ref{d1}, we get $\qdepth(S/I)\leq \left\lfloor n+m+\frac{1}{2}-\sqrt{2mn+\frac{1}{4}} \right\rfloor$.

In order to prove the last part, we consider the function $$\psi(x)=x+m+\frac{1}{2}-\sqrt{2mx+\frac{1}{4}},\;x\in [m,\infty).$$
Since $\frac{d\psi}{dx}(x)>0$, $m\leq n\leq 2m-2$ and $\psi(2m-1)=m$, it follows that
$$\left\lfloor 2m+\frac{1}{2}-\sqrt{2m^2+\frac{1}{4}} \right\rfloor \leq \lfloor \psi(n) \rfloor = \left\lfloor n+m+\frac{1}{2}-\sqrt{2mn+\frac{1}{4}} \right\rfloor < \psi(2m-1)=m,$$
as required.
\end{proof}


\begin{teor}
Let $n\geq m\geq 2$ be two integers. Then
$$\qdepth(S/I)=\max\{d\leq n+m\;:\;\binom{d-n}{2\ell}+\binom{d-m}{2\ell} \geq \binom{d}{2\ell}
\text{ for all }1\leq \ell \leq \left\lfloor \frac{d}{2} \right\rfloor \}.$$
Moreover, $\qdepth(S/I)<m$ if $n\leq 2m-2$. Also, $m\leq \qdepth(S/I)\leq n-m+1$ if $n\geq 2m-1$.
\end{teor}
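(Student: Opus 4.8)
The plan is to start from the characterization in Theorem \ref{d1}, that $\qdepth(S/I)$ is the largest $d$ with $\beta_k^d(S/I)\ge 0$ for all $0\le k\le d$, and to rewrite the coefficients of Lemma \ref{beta2}(1) in a more transparent form. Applying the upper-negation identity $\binom{n-d+k-1}{k}=(-1)^k\binom{d-n}{k}$ (the same identity used in Lemma \ref{magic}) to the first two terms, I would obtain
$$\beta_k^d(S/I)=(-1)^k\left(\binom{d-n}{k}+\binom{d-m}{k}-\binom{d}{k}\right).$$
Consequently $\beta_k^d(S/I)\ge 0$ is equivalent to $\binom{d-n}{k}+\binom{d-m}{k}\ge\binom{d}{k}$ when $k$ is even, and to the reversed inequality $\binom{d-n}{k}+\binom{d-m}{k}\le\binom{d}{k}$ when $k$ is odd. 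Thus the even-index conditions are exactly the ones appearing in the claimed formula, and the whole problem reduces to showing that the odd-index conditions impose nothing extra.

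The crux is therefore to prove that, in the range that matters, every odd constraint holds automatically. I would first observe that for $d\le n$ and odd $k\ge 1$ one has $\binom{d-n}{k}\le 0$: indeed $d-n\le 0$, so either the coefficient vanishes or, by upper negation, it equals $-\binom{n-d+k-1}{k}\le 0$. On the other hand $\binom{d-m}{k}\le\binom{d}{k}$ always holds, since for $d\ge m$ both sides are ordinary binomials with $d-m\le d$, while for $d<m$ the left side is $\le 0\le\binom{d}{k}$. Adding the two estimates gives the required odd inequality $\binom{d-n}{k}+\binom{d-m}{k}\le\binom{d}{k}$. The cases $k=0$ and $k=1$ are immediate, giving $\beta_0^d(S/I)=1$ and $\beta_1^d(S/I)=n+m-d\ge 0$. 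Hence, for every $d\le n$, the requirement that $\beta_k^d(S/I)\ge 0$ for all $k$ is equivalent to the requirement $\binom{d-n}{2\ell}+\binom{d-m}{2\ell}\ge\binom{d}{2\ell}$ for all $1\le\ell\le\lfloor d/2\rfloor$.

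To finish identifying the two maxima I must rule out admissible $d$ with $d>n$. Here I would use the $\ell=1$ even constraint, which is precisely $\beta_2^d(S/I)=\varphi(d)\ge 0$ in the notation of Theorem \ref{t21}. Since $\varphi(d)<0$ for $d>n+m+\tfrac12-\sqrt{2mn+\tfrac14}$, and since this threshold is $<n+1$ (equivalently $(m-\tfrac12)^2<2mn+\tfrac14$, i.e.\ $m-1<2n$, which always holds), every $d\le N$ satisfying even the single constraint $\ell=1$ must already satisfy $d\le n$. Therefore the set of $d$ obeying all even constraints coincides with the set of $d\le n$ obeying them, and by the previous paragraph this set equals $\{d:\beta_k^d(S/I)\ge0\text{ for all }k\}$; taking maxima yields the stated formula.

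For the remaining assertions I would invoke the earlier results. The inequality $\qdepth(S/I)<m$ for $n\le 2m-2$ is exactly the last statement of Theorem \ref{t21}. When $n\ge 2m-1$, the lower bound $m\le\qdepth(S/I)$ follows by combining Proposition \ref{p21}(3), which gives $\sdepth(S/I)=m$, with Proposition \ref{p1}. For the upper bound I would again use Theorem \ref{t21}: it suffices to check $n+m+\tfrac12-\sqrt{2mn+\tfrac14}<n-m+2$, which upon squaring $2m-\tfrac32<\sqrt{2mn+\tfrac14}$ reduces to $n>2m-3+\tfrac1m$, a consequence of $n\ge 2m-1$; hence $\lfloor n+m+\tfrac12-\sqrt{2mn+\tfrac14}\rfloor\le n-m+1$. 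The main obstacle is the reduction to even indices: one must verify that the odd-index coefficients $\beta_k^d(S/I)$ are never the binding constraints, which is exactly what the sign analysis of the generalized binomial coefficients together with the a priori bound $d\le n$ accomplish.
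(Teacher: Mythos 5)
Your proposal is correct. It shares the paper's starting point: both proofs use Lemma \ref{beta2}(1) together with the upper-negation identity to write $\beta_k^d(S/I)=(-1)^k\bigl(\binom{d-n}{k}+\binom{d-m}{k}-\binom{d}{k}\bigr)$, so that the even-index conditions are exactly those in the stated formula and the inequality $\qdepth(S/I)\le q$ is immediate. Where you genuinely diverge is in disposing of the odd-index conditions. The paper checks $\beta_k^q(S/I)\ge 0$ for odd $k$ only at the single value $d=q$, via a two-case analysis ($n\le 2m-2$ versus $n\ge 2m-1$) that depends on the a priori bounds $q<m$, respectively $m\le q\le n-m+1$, imported from Theorem \ref{t21}. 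You instead prove the stronger uniform statement that for every $d\le n$ and every odd $k\ge1$ one has $\binom{d-n}{k}\le 0$ and $\binom{d-m}{k}\le\binom{d}{k}$, hence $\beta_k^d(S/I)\ge 0$, and then note that the single even constraint $\ell=1$, i.e.\ $\varphi(d)\ge 0$, already forces $d\le n$ because $n+m+\tfrac12-\sqrt{2mn+\tfrac14}<n+1$ (equivalent to $m-1<2n$). This eliminates the case split from the main identity, shows the two admissible sets coincide outright rather than just at their maxima, and incidentally supplies in clean form the estimate $\binom{q-m}{k}\le\binom{q}{k}$ that the paper's case (ii) implicitly relies on (where the sign of the middle term $\binom{m-q+k-1}{k}=-\binom{q-m}{k}$ is written with the wrong sign, though the paper's bound still goes through). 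What the paper's route buys is that the explicit localization of $q$ in each case is produced along the way, but you recover the ``moreover'' assertions by exactly the same external inputs the paper uses, namely Theorem \ref{t21}, Proposition \ref{p21}(3) and Proposition \ref{p1}, plus a short computation showing $\lfloor n+m+\tfrac12-\sqrt{2mn+\tfrac14}\rfloor\le n-m+1$ when $n\ge 2m-1$; so nothing is lost.
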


\begin{proof}
Let $q:=\max\{d\leq n+m\;:\;\binom{d-n}{2\ell}+\binom{d-m}{2\ell} \geq \binom{d}{2\ell}
\text{ for all }1\leq \ell \leq \left\lfloor \frac{d}{2} \right\rfloor \}.$

From Lemma \ref{beta2}(1) and the identity $\binom{x}{k}=\binom{-x+k-1}{k}$ it follows that
\begin{equation}\label{beta2l}
\beta_{2\ell}^d(S/I) = \binom{d-n}{2\ell}+\binom{d-m}{2\ell} - \binom{d}{2\ell}.
\end{equation}
Hence $\qdepth(S/I)\leq q$. 

On the other hand, from the proof of Theorem \ref{t21} and \eqref{beta2l}, it follows that
\begin{equation}\label{curcubeu}
q\leq \left\lfloor n+m+\frac{1}{2}-\sqrt{2mn+\frac{1}{4}} \right\rfloor.
\end{equation}
We consider two cases:
\begin{enumerate}
\item[(i)] $n\leq 2m-2$. From Theorem \ref{t21}, it follows that $q<m$. From Lemma \ref{beta2}(2) and $0\leq k\leq q$ with
$k$ odd, we have
$$\beta^q_k(S/I)= \binom{n-q+k-1}{k}+\binom{m-q+k-1}{k}+\binom{q}{k} \geq \binom{n-m+k}{k}+1+\binom{q}{k} > 0.$$
Since, by the definition of $q$, we have $\beta^q_k(S/I)\geq 0$ for all $0\leq k\leq q$ with $k$ even, we conclude
that $\qdepth(S/I)\geq q$. Hence $\qdepth(S/I)=q<m$, as required.
\item[(ii)] $n\geq 2m-1$. First, note that $$m=\sdepth(S/I)\leq \qdepth(S/I)\leq q.$$
From \eqref{curcubeu} and the above it follows that
$$m\leq q\leq \left\lfloor n+m+\frac{1}{2}-\sqrt{2m(2m-1)+\frac{1}{4}} \right\rfloor = \left\lfloor n+m+\frac{1}{2}-(2m-\frac{1}{2})
\right\rfloor = n-m+1.$$
 From Lemma \ref{beta2}(2) and $0\leq k\leq q$ with
$k$ odd, we have
$$\beta^q_k(S/I)= \binom{n-q+k-1}{k} + \binom{q-m}{k} + \binom{q}{k} \geq \binom{m}{k} + 0 + \binom{m}{k} \geq 0.$$
Using the same argument as in the case (i), it follows that $\qdepth(S/I)=q$, as required. 
\end{enumerate}
Thus, the proof is complete.
\end{proof}

\begin{lema}\label{p252}
Let $n\geq m\geq 1$ be two integers. Then
$$\qdepth(I)\leq \left\lfloor \frac{n+m+2}{2} \right \rfloor.$$
\end{lema}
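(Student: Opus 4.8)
The plan is to invoke the characterization of Hilbert depth from Theorem \ref{d1}. Writing $N=n+m$, it suffices to exhibit, for every integer $d$ with $\lfloor\frac{N+2}{2}\rfloor<d\le N$, a single index $k\le d$ for which $\beta_k^d(I)<0$; combined with the trivial bound $\qdepth(I)\le N$, this forces $\qdepth(I)\le\lfloor\frac{N+2}{2}\rfloor$. A quick inspection of the low indices via Lemma \ref{beta2}(2) shows they are useless for this purpose: one computes $\beta_0^d(I)=\beta_1^d(I)=0$ and $\beta_2^d(I)=nm>0$, so none of these ever becomes negative. The first informative coefficient is $\beta_3^d(I)$, and I expect it to be exactly the one that yields the bound.

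The heart of the argument is therefore the evaluation of $\beta_3^d(I)$ in closed form. Using Lemma \ref{beta2}(2) with $k=3$ together with the identity $\binom{a+k-1}{k}=(-1)^k\binom{-a}{k}$ already used in the excerpt, I would rewrite
$$\beta_3^d(I)=-\left[\binom{d-N}{3}-\binom{d-n}{3}-\binom{d-m}{3}+\binom{d}{3}\right],$$
expand each term through $\binom{x}{3}=\tfrac16(x^3-3x^2+2x)$, and collect the bracket according to the signed power sums $\sigma_j=N^j-n^j-m^j$ (the $\binom{d}{3}$ term contributing at $a=0$). The relation $N=n+m$ makes the low power sums degenerate: $\sigma_0=0$, $\sigma_1=0$, $\sigma_2=N^2-n^2-m^2=2nm$ and $\sigma_3=N^3-n^3-m^3=3nmN$. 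Since the $d^3$ and $d^2$ contributions are governed by $\sigma_0$ and $\sigma_1$, they vanish, and the surviving terms (from $\sigma_2$ and $\sigma_3$) collapse to a linear polynomial in $d$, giving
$$\beta_3^d(I)=\frac{nm}{2}\,(N+2-2d).$$

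From this closed form the conclusion is immediate: $\beta_3^d(I)<0$ precisely when $d>\frac{N+2}{2}$, that is, when $d\ge\lfloor\frac{N+2}{2}\rfloor+1$. For every such $d$ lying in the range $d\le N$ one has $d\ge 3$ (since $N\ge 3$ in that situation), so the index $k=3$ is admissible in Theorem \ref{d1}; the only remaining case $N=2$, i.e.\ $n=m=1$, is handled directly by $\qdepth(I)\le N=2=\lfloor\frac{N+2}{2}\rfloor$. Hence no $d>\lfloor\frac{N+2}{2}\rfloor$ can equal the Hilbert depth, and we obtain $\qdepth(I)\le\lfloor\frac{n+m+2}{2}\rfloor$. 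I expect the one genuinely delicate step to be the polynomial bookkeeping that makes the cubic and quadratic terms in $d$ cancel, but this cancellation is forced entirely by $N=n+m$, so no input beyond Lemma \ref{beta2} is required.
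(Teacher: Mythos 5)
Your proposal is correct and follows essentially the same route as the paper: both arguments reduce the bound to the single closed-form computation $\beta_3^d(I)=\frac{nm}{2}(n+m+2-2d)$ from Lemma \ref{beta2}(2), observe that this is negative exactly when $d>\frac{n+m+2}{2}$, and dispose of the degenerate case $n=m=1$ separately. The paper merely states the evaluation as "straightforward computations," whereas you spell out the cancellation of the cubic and quadratic terms via the power sums $N^j-n^j-m^j$; the substance is identical.
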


\begin{proof}
If $n+m=2$, that is $n=m=1$, then there is nothing to prove. Assume $n+m\geq 3$.
From Lemma \ref{beta2}(2) and straightforward computations, it follows that 
$$\beta_3^d(I) = \frac{nm(n+m-2d+2)}{2} <0, $$ 
if and only if $d>\frac{n+m+2}{2}$. Hence, we get the required result.
\end{proof}

\begin{teor}\label{t22}
Let $n\geq m\geq 1$ be two integers.
\begin{enumerate}
\item[(1)] If $n$ and $m$ are not both even then we have that:
$$\sdepth(I)=\qdepth(I)=\left\lceil \frac{n}{2} \right\rceil + \left\lceil \frac{m}{2} \right\rceil.$$
\item[(2)] If $n=2t$ and $m=2s$ then we have that:
$$t+s\leq \sdepth(I)\leq \qdepth(I)\leq t+s+1.$$
\end{enumerate}
In both cases, we have $\hdepth(I)=\left\lfloor \frac{n+m+2}{2} \right \rfloor$.
\end{teor}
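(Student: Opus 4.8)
The plan is to pin down $\sdepth(I)$ and $\qdepth(I)$ by squeezing them between three already-available facts: the lower bound $\sdepth(I)\ge\lceil\frac n2\rceil+\lceil\frac m2\rceil$ of Proposition \ref{p21}(2), the inequality $\sdepth(I)\le\qdepth(I)$ of Proposition \ref{p1}, and the upper bound $\qdepth(I)\le\lfloor\frac{n+m+2}{2}\rfloor$ of Lemma \ref{p252}. The arithmetic fact driving everything, which I would check by running through the parity cases, is that $\lceil\frac n2\rceil+\lceil\frac m2\rceil=\lfloor\frac{n+m+2}{2}\rfloor$ whenever $n,m$ are not both even, whereas $\lceil\frac n2\rceil+\lceil\frac m2\rceil=\lfloor\frac{n+m}{2}\rfloor=\lfloor\frac{n+m+2}{2}\rfloor-1$ when both are even. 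Recall also that in this paper $\hdepth=\qdepth$ by definition, so the last assertion is about $\qdepth(I)$.

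For part (1), with $n,m$ not both even, I would assemble the three ingredients into the chain
$$\lceil\tfrac n2\rceil+\lceil\tfrac m2\rceil\le\sdepth(I)\le\qdepth(I)\le\left\lfloor\tfrac{n+m+2}{2}\right\rfloor=\lceil\tfrac n2\rceil+\lceil\tfrac m2\rceil,$$
which forces equality throughout and, since $\hdepth=\qdepth$, simultaneously settles the final claim in this case.

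For part (2), writing $n=2t$, $m=2s$, the very same three ingredients give $t+s\le\sdepth(I)\le\qdepth(I)\le t+s+1$ at once, because now $\lceil\frac n2\rceil+\lceil\frac m2\rceil=t+s$ while $\lfloor\frac{n+m+2}{2}\rfloor=t+s+1$. The remaining, genuinely new content is the equality $\hdepth(I)=t+s+1$; since $\qdepth(I)\le t+s+1$ is already known, by Theorem \ref{d1} it suffices to prove $\beta_k^{t+s+1}(I)\ge0$ for all $0\le k\le t+s+1$. I would compute these from Lemma \ref{beta2}(2): with $d=t+s+1$, $a:=t-s\ge0$ and $b:=t+s\ge2$, the shifted arguments become $N-d+k-1=b+k-2$, $n-d+k-1=a+k-2$ and $m-d+k-1=-a+k-2$. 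The cases $k=0,1$ give $\beta_k^{t+s+1}(I)=0$ by direct substitution (using $N=n+m$). For $k\ge2$ I would apply the upper-negation identity $\binom{-a+k-2}{k}=(-1)^k\binom{a+1}{k}$, exactly as in the derivation of \eqref{beta2l}, to rewrite
$$\beta_k^{t+s+1}(I)=\left[\binom{b+k-2}{k}-\binom{a+k-2}{k}\right]\pm\left[\binom{b+1}{k}-\binom{a+1}{k}\right],$$
with sign $+$ for even $k$ and $-$ for odd $k$. Since $b\ge a\ge0$ and $\binom{\cdot}{k}$ is nondecreasing on nonnegative arguments, both bracketed differences are nonnegative, so the even case is immediate.

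The main obstacle is the odd case, where the two brackets enter with opposite signs and I must show the first dominates, i.e. $\binom{b+k-2}{k}-\binom{a+k-2}{k}\ge\binom{b+1}{k}-\binom{a+1}{k}$ for odd $k\ge3$. I would prove this by telescoping with Pascal's rule: for any shift $c$ one has $\binom{b+c}{k}-\binom{a+c}{k}=\sum_{y=a}^{b-1}\binom{y+c}{k-1}$, and since $k-2\ge1$ each summand for $c=k-2$ dominates the corresponding summand for $c=1$ (the lower index $k-1$ is the same and $\binom{\cdot}{k-1}$ is nondecreasing). Comparing the two sums term by term yields the required inequality, hence $\beta_k^{t+s+1}(I)\ge0$ for every $k$, giving $\qdepth(I)=\hdepth(I)=t+s+1=\lfloor\frac{n+m+2}{2}\rfloor$. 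The only subtlety to watch throughout is the consistent handling of negative upper binomial arguments via the negation identity, precisely as already used earlier in the text.
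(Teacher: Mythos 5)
Your proposal is correct, and its skeleton coincides with the paper's: the same three ingredients (Proposition \ref{p21}(2), Proposition \ref{p1}, Lemma \ref{p252}) give the squeeze in both parts, and the remaining content of part (2) is reduced, exactly as in the paper, to checking $\beta_k^{t+s+1}(I)\geq 0$ for all $k$ via Lemma \ref{beta2}(2) and Theorem \ref{d1}. Where you genuinely diverge is in how that nonnegativity is proved. The paper computes $\beta_k^{t+s+1}(I)$ for $k\leq 5$ separately and then, for $k\geq 6$, splits into eight subcases according to the parity of $k$ and the size of $a=t-s$ relative to $k$, with ad hoc estimates in each. You instead normalize all four binomial coefficients to nonnegative upper arguments via the negation identity, observe that the even case is trivial, and dispose of the odd case uniformly by the telescoping identity $\binom{b+c}{k}-\binom{a+c}{k}=\sum_{y=a}^{b-1}\binom{y+c}{k-1}$ together with a term-by-term comparison of the sums for $c=k-2$ and $c=1$ (valid for all odd $k\geq 3$, so you do not even need the separate small-$k$ computations beyond $k=0,1$). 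This is a real simplification: it replaces the paper's case analysis --- which as printed contains some typographical slips in subcases (i.2) and (ii.2) --- by a two-line argument, at no loss of generality. The only points to be careful about, which you correctly flag, are that after rewriting all upper arguments $b+k-2$, $a+k-2$, $a+1$, $b+1$ are nonnegative for $k\geq 2$ (so the combinatorial and polynomial conventions agree), and that $s\geq 1$ guarantees nothing degenerates.
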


\begin{proof}
(1) According to Proposition \ref{p21}(2), we have that
\begin{equation}\label{eku1}
 \sdepth(I)\geq \left\lceil \frac{n}{2} \right\rceil + \left\lceil \frac{m}{2} \right\rceil.
\end{equation}
On the other hand, according to Proposition \ref{p1} and Lemma \ref{p252}, we have that
\begin{equation}\label{eku2}
\sdepth(I)\leq \qdepth(I)\leq \left\lfloor \frac{n+m}{2} \right \rfloor + 1.
\end{equation}
Note that, if $n$ and $m$ are not both even, then
\begin{equation}\label{eku3}
\left\lceil \frac{n}{2} \right\rceil + \left\lceil \frac{m}{2} \right\rceil = \left\lfloor \frac{n+m}{2} \right \rfloor + 1.
\end{equation} 
Hence, (1) follows from \eqref{eku1}, \eqref{eku2} and \eqref{eku3}.

(2) From \eqref{eku1} and \eqref{eku2} we have that
$$t+s\leq \sdepth(I)\leq \qdepth(I) \leq t+s+1.$$
On the other hand, for $0\leq k\leq t+s+1$, from Lemma \ref{beta2}(2) we have
\begin{equation}\label{betakts}
\beta_k^{t+s+1}(I) = \binom{t+s-2+k}{k} - \binom{t-s-2+k}{k} - \binom{s-t-2+k}{k} + (-1)^k\binom{t+s+1}{k}.
\end{equation}
By direct computations, from \eqref{betakts} it follows that 
$$\beta_0^{t+s+1}(I)=0,\; \beta_1^{t+s+1}(I)=0,\; \beta_2^{t+s+1}(I)=4st\text{ and }\beta_3^{t+s+1}(I)=0.$$
Also, by straightforward computations, we get
$$\beta_4^{t+s+1}(I)=\beta_5^{t+s+1}(I)=\frac{ts(2s^2+2t^2-1)}{3} > 0.$$
Now, assume $6\leq k \leq t+s+1$. Without any loss of generality, we assume that $t=s+a$, where $a$ is a nonnegative integer.
In order to prove that $\beta:=\beta_k^{t+s+1}(I)\geq 0$, we consider the following cases:
\begin{enumerate}
\item[(i)] $k$ is even. 
\begin{enumerate}
\item[(i.1)] $a=0$. From \eqref{betakts} and the fact that $s\geq 1$ it follows that
$$\beta=\binom{2s-2+k}{k} - \binom{k-2}{k} -\binom{k-2}{k} + \binom{2s+1}{k} \geq \binom{k}{k} + \binom{2s+1}{k} >0.$$
\item[(i.2)] $a=1$. From \eqref{betakts} and the fact that $s\geq 1$ it follows that
$$\beta=\binom{2s-1+k}{k} - \binom{k-1}{k} - \binom{k-3}{k} + \binom{2s+2}{k} \binom{k+1}{k} + \binom{2s+2}{k} >0.$$
\item[(i.3)] $a\geq 2$ and $k\geq a+2$. From \eqref{betakts} we get
$$ \beta = \binom{2s+a+k-2}{k} - \binom{a+k-2}{k} + \binom{2s+a+1}{k} > \binom{2s+a+1}{k} \geq 0.$$
\item[(i.4)] $a\geq 5$ and $k\leq a+1$. From \eqref{betakts}, using $\binom{-x}{k}=(-1)^k\binom{x+k-1}{k}$, we get
\begin{align*}
& \beta = \binom{2s+a+k-2}{k} - \binom{a+k-2}{k} - \binom{a+1}{k}+\binom{2s+a+1}{k} = \\
& = \binom{2s+a+k-2}{k} - \binom{a+k-2}{k} + \binom{2s+a+1}{k} - \binom{a+1}{k} > 0.
\end{align*}
\end{enumerate}
 
\item[(ii)] $k$ is odd.
\begin{enumerate}
\item[(ii.1)] $a=0$. From \eqref{betakts} and the fact that $k\geq 6$ it follows that
$$\beta=\binom{2s-2+k}{k} - \binom{k-2}{k} -\binom{k-2}{k} - \binom{2s+1}{k} \geq \binom{2s+4}{k} - \binom{2s+1}{k} >0.$$
\item[(ii.2)] $a=0$. From \eqref{betakts} and the fact that $k\geq 6$ it follows that
$$\beta=\binom{2s-1+k}{k} - \binom{k-1}{k} -\binom{k-3}{k} - \binom{2s+2}{k} \geq \binom{2s+5}{k} - \binom{2s+2}{k} >0.$$
\item[(ii.3)] $a\geq 2$ and $k\geq a+2$. From \eqref{betakts} and the fact that $k\geq 6$ and $s\geq 1$ we get \small
\begin{align*}
 & \beta = \binom{2s+a+k-2}{k} - \binom{a+k-2}{k} - \binom{2s+a+1}{k} > \binom{2s+a+k-3}{k-1} - \\  
 & - \binom{a+k-2}{k} \geq \binom{a+k-1}{k-1} - \binom{a+k-2}{k} = \frac{ak+k^2-k-a^2+a}{a(a-1)} \binom{a+k-2}{k} > 0.	
\end{align*}
\normalsize
\item[(ii.4)] $a\geq 5$ and $k\leq a+1$. From \eqref{betakts}, using $\binom{-x}{k}=(-1)^k\binom{x+k-1}{k}$, we get
\begin{align*}
& \beta = \binom{2s+a+k-2}{k} - \binom{a+k-2}{k} + \binom{a+1}{k} - \binom{2s+a+1}{k} = \\
& = \binom{2s+a+k-2}{k} - \binom{2s+a+1}{k} - \left( \binom{a+k-2}{k} - \binom{a+1}{k} \right) = \\
& = \sum_{\ell=2s+a+1}^{2s+a+k-3} \binom{\ell}{k-1} - \sum_{\ell=a+1}^{a+k-3} \binom{\ell}{k-1} > 0.
\end{align*}
\end{enumerate}
\end{enumerate}
Hence, $\qdepth(I)=s+t+1$ and the proof is complete.
\end{proof}


\section{A generalization}

Let $n_1,n_2,\ldots,n_r$ be some positive integers, $N=n_1+\cdots+n_r$ and $S:=K[x_1,\ldots,x_N]$. We consider the ideal
$$I:=I_{n_1,n_2,\ldots,n_r}:=(x_1,\ldots,x_{n_1})\cap (x_{n_1+1},\ldots,x_{n_1+n_2})\cap \cdots \cap (x_{n_1+\cdots+n_{r-1}+1},\ldots,x_N)\subset S,$$
which generalize the ideal $I$ from the previous section.

\begin{lema}\label{l31}
With the above notations, we have that
$$\alpha_k(I)=\begin{cases} 0,& k\leq r-1 \\ \sum\limits_{\substack{\ell_1,\ell_2,\ldots,\ell_r\geq 1 \\ \ell_1+\cdots+\ell_r=k} }
  \binom{n_1}{\ell_1}\binom{n_2}{\ell_2}\cdots \binom{n_r}{\ell_r},& k\geq r \end{cases}.$$
\end{lema}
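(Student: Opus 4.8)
The plan is to count the squarefree monomials of degree $k$ lying in $I$ directly, reducing membership in the intersection to a combinatorial incidence condition. Write $[N] = B_1 \sqcup B_2 \sqcup \cdots \sqcup B_r$ for the partition of the variable indices into consecutive blocks $B_i$ with $|B_i| = n_i$, so that $I = \bigcap_{i=1}^r P_i$ with $P_i = (x_j : j \in B_i)$. Recall that $\alpha_k(I) = |\{A \subset [N] : |A| = k,\ x_A \in I\}|$.

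First I would establish the membership criterion. Since each $P_i$ is a monomial prime ideal generated by the variables indexed by $B_i$, a squarefree monomial $x_A$ lies in $P_i$ precisely when some generator $x_j$ with $j \in B_i$ divides it, that is, when $A \cap B_i \neq \emptyset$. Intersecting over $i$, we obtain $x_A \in I$ if and only if $A$ meets every block, i.e. $A \cap B_i \neq \emptyset$ for all $i = 1, \ldots, r$. This is the only conceptual step; everything that follows is enumeration.

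Next I would stratify the count by the block sizes. For $A \subset [N]$, put $\ell_i := |A \cap B_i|$, so that $\ell_1 + \cdots + \ell_r = |A| = k$, and the membership criterion becomes $\ell_i \geq 1$ for every $i$. Because the blocks are pairwise disjoint, the choices of $A \cap B_i$ inside each $B_i$ are independent, and for a fixed tuple $(\ell_1, \ldots, \ell_r)$ there are exactly $\prod_{i=1}^r \binom{n_i}{\ell_i}$ subsets $A$ realizing it. Summing over all admissible tuples yields
$$\alpha_k(I) = \sum_{\substack{\ell_1, \ldots, \ell_r \geq 1 \\ \ell_1 + \cdots + \ell_r = k}} \binom{n_1}{\ell_1} \binom{n_2}{\ell_2} \cdots \binom{n_r}{\ell_r}.$$

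Finally, the case distinction in the statement is immediate: if $k \leq r - 1$, then no tuple of $r$ positive integers can sum to $k$, so the index set of the sum is empty and $\alpha_k(I) = 0$; for $k \geq r$ the displayed sum is precisely the asserted formula. I do not anticipate a genuine obstacle here, since the lemma is a clean generalization of Lemma \ref{alfaa}(1), which is the case $r = 2$. The only point requiring care is the membership criterion, which relies essentially on the blocks $B_i$ being pairwise disjoint, so that membership in the intersection decouples into independent per-block conditions; once that is noted, the remaining enumeration is routine.
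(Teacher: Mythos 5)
Your proof is correct and follows essentially the same route as the paper's: both reduce membership of a squarefree monomial $x_A$ in $I=\bigcap_i P_i$ to the condition that $A$ meets every block $B_i$, and then count by the tuple $(\ell_1,\ldots,\ell_r)$ of intersection sizes using independence of the blocks. Your write-up merely makes the membership criterion explicit where the paper states the factorization $u=u_1\cdots u_r$ with $u_j\neq 1$ directly; the content is identical.
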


\begin{proof}
Since $I$ is generated by square free monomials of degree $k$, it is clear that $\alpha_k(I)=0$ for $k\leq r-1$.
Assume $k\geq r$ and let $u\in I$ a square free monomial of degree $k$. It follows that $u=u_1\cdots u_r$, where
$$1\neq u_j \in I_j:=(x_{n_1+\cdots+n_{j-1}+1},\ldots,x_{n_1+\cdots+n_j})\text{ for all }1\leq j\leq r.$$
Let $\ell_j=\deg(u_j)\geq 1$. Since there are $\binom{n_j}{\ell_j}$ squarefree monomials of degree $\ell_j$ in $I_j$,
we get the required conclusion.
\end{proof}

\begin{lema}\label{l32}
With the above notations, we have that
\begin{enumerate}
\item[(1)] $\alpha_k(I)=0$ for $k\leq r-1$.
\item[(2)] $\alpha_r(I)=n_1n_2\cdots n_r$.
\item[(3)] $\alpha_{r+1}(I)=n_1n_2\cdots n_r\cdot \frac{n_1+\cdots+n_r-r}{2}$.
\item[(4)] $\alpha_k(I)=\binom{N}{k}$ for $k\geq N-\min\limits_{i=1}^r{n_i}+1$.
\end{enumerate}
\end{lema}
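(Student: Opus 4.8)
The plan is to derive parts (1)--(3) directly from Lemma \ref{l31} by specializing the degree $k$, and to handle part (4) separately by returning to the combinatorial meaning of membership in $I$. Part (1) is literally the first branch of Lemma \ref{l31}, so nothing is needed there. For (2) I would set $k=r$ in the second branch: the only composition $\ell_1+\cdots+\ell_r=r$ with every $\ell_j\geq 1$ is $\ell_1=\cdots=\ell_r=1$, so the sum collapses to the single term $\binom{n_1}{1}\cdots\binom{n_r}{1}=n_1n_2\cdots n_r$.

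For (3) I would take $k=r+1$. Now the constraint $\ell_1+\cdots+\ell_r=r+1$ with each $\ell_j\geq 1$ forces exactly one index $i$ to satisfy $\ell_i=2$ while all others equal $1$. Grouping the terms of Lemma \ref{l31} by this distinguished index gives
$$\alpha_{r+1}(I)=\sum_{i=1}^r \binom{n_i}{2}\prod_{j\neq i} n_j.$$
Then I would write $\binom{n_i}{2}=\tfrac{1}{2}n_i(n_i-1)$ and factor out the full product $n_1\cdots n_r$, so that the $i$-th summand becomes $\tfrac{1}{2}(n_i-1)\,n_1\cdots n_r$. Summing over $i$ and using $\sum_{i=1}^r(n_i-1)=N-r$ yields $\alpha_{r+1}(I)=n_1\cdots n_r\cdot\frac{N-r}{2}$, which is exactly the asserted formula since $N=n_1+\cdots+n_r$. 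This is routine algebra once the surviving compositions have been identified.

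The genuinely different part is (4), and here I would abandon the formula from Lemma \ref{l31} and instead exploit that $I$ is an intersection of the block primes. Write $B_j=\{n_1+\cdots+n_{j-1}+1,\ldots,n_1+\cdots+n_j\}$ for the $j$-th block of variable indices, so $|B_j|=n_j$. A squarefree monomial $x_A$ lies in $I$ precisely when $A$ meets every block, i.e.\ $A\cap B_j\neq\emptyset$ for all $j$; equivalently, the complement $[N]\setminus A$ contains no full block $B_j$. Since $|[N]\setminus A|=N-k$, the hypothesis $k\geq N-\min_{i}n_i+1$ gives $N-k\leq \min_i n_i-1<n_j$ for every $j$, so the complement is too small to contain any $B_j$. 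Hence \emph{every} $A$ with $|A|=k$ satisfies $x_A\in I$, and therefore $\alpha_k(I)=\binom{N}{k}$, the total count of squarefree monomials of degree $k$ in $S$.

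I expect (4) to be the only step requiring a fresh idea, namely the pigeonhole/size observation that a set of size $N-k<\min_i n_i$ cannot engulf a whole block; parts (1)--(3) are pure bookkeeping on compositions applied to the previous lemma.
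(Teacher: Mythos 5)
Your proposal is correct and follows exactly the paper's route: parts (1)--(3) are read off from Lemma \ref{l31} by identifying the surviving compositions (all $\ell_j=1$ for $k=r$; exactly one $\ell_i=2$ for $k=r+1$), and part (4) is the paper's observation that every squarefree monomial of degree $k\geq N-\min_i n_i+1$ must meet each block, which you justify with the correct complement-size argument. You have merely supplied the details the paper leaves as ``immediate,'' so there is nothing to correct.
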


\begin{proof}
(1), (2) and (3) follow immediately from Lemma \ref{l31}. In order to prove (4), it is enough to observe
that any squarefree monomial $u\in S$ of degree $k\geq N-\min\limits_{i=1}^r{n_i}+1$, belongs to $I$.
\end{proof}

\begin{teor}\label{t31}
With the above notations, we have that:
$$ \left\lfloor \frac{N+r}{2} \right\rfloor \geq \qdepth(I) \geq \sdepth(I) \geq \left\lceil \frac{n_1}{2} \right\rceil
+ \cdots + \left\lceil \frac{n_r}{2} \right\rceil.$$
\end{teor}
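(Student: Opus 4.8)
The plan is to prove the three inequalities independently; the middle one, $\qdepth(I)\geq \sdepth(I)$, is nothing but Proposition \ref{p1}, so all the effort goes into the two outer bounds.

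For the upper bound $\qdepth(I)\leq \lfloor \frac{N+r}{2}\rfloor$ I would imitate Lemma \ref{p252} (which is the case $r=2$) and compute a single coefficient $\beta_{r+1}^d(I)$. The key structural fact is Lemma \ref{l32}(1): since $\alpha_j(I)=0$ for all $j\leq r-1$, the alternating sum \eqref{betak} defining $\beta_{r+1}^d(I)$ collapses to its two top terms $j=r$ and $j=r+1$. Using Lemma \ref{l32}(2)--(3) this gives
\begin{align*}
\beta_{r+1}^d(I) &= -\binom{d-r}{1}\alpha_r(I)+\binom{d-r-1}{0}\alpha_{r+1}(I) \\
&= n_1n_2\cdots n_r\left(\frac{N-r}{2}-(d-r)\right)=n_1n_2\cdots n_r\cdot\frac{N+r-2d}{2}.
\end{align*}
As $n_1\cdots n_r>0$, this is strictly negative exactly when $d>\frac{N+r}{2}$, that is for $d\geq \lfloor \frac{N+r}{2}\rfloor+1$; since $N\geq r$ we have $\lfloor \frac{N+r}{2}\rfloor+1\geq r+1$, so $\beta_{r+1}^d(I)$ is a genuine coefficient in the range where it is negative. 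By Theorem \ref{d1} this yields $\qdepth(I)\leq \lfloor\frac{N+r}{2}\rfloor$.

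For the lower bound I would first record that, because the $r$ monomial prime ideals $I_j:=(x_{n_1+\cdots+n_{j-1}+1},\ldots,x_{n_1+\cdots+n_j})$ involve pairwise disjoint variables, the intersection coincides with the product, $I=I_1I_2\cdots I_r$ (this is already visible from $\alpha_r(I)=n_1\cdots n_r$ in Lemma \ref{l32}(2)). Then I would argue by induction on $r$: writing $I=(I_1\cdots I_{r-1})\cdot I_r$ as a product of two monomial ideals living in disjoint polynomial subrings, the additivity of Stanley depth from \cite[Lemma 1.1]{apop} gives $\sdepth(I)\geq \sdepth(I_1\cdots I_{r-1})+\sdepth_{S_r}(I_r)$, where $S_r=K[x_{n_1+\cdots+n_{r-1}+1},\ldots,x_N]$. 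The inductive hypothesis handles the first summand and the base formula $\sdepth_{S_j}(I_j)=\left\lceil \frac{n_j}{2}\right\rceil$ from \cite[Theorem 2.2]{biro} handles the second, producing $\sdepth(I)\geq \left\lceil \frac{n_1}{2}\right\rceil+\cdots+\left\lceil \frac{n_r}{2}\right\rceil$.

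I expect the upper bound to be entirely routine once one notices the two-term collapse of $\beta_{r+1}^d(I)$. The delicate point is the lower bound: one must be sure that $I$ really is the product $I_1\cdots I_r$ and, above all, that \cite[Lemma 1.1]{apop} does supply the full additive estimate $\sdepth(IJ)\geq \sdepth(I)+\sdepth(J)$ for arbitrary monomial ideals $I,J$ in disjoint variable sets, so that it can be iterated through the induction. If that lemma is stated only for special factors, the inductive step would instead have to be justified directly by tensoring Stanley decompositions of $I_1\cdots I_{r-1}$ and $I_r$ (each Stanley space $u_iv_jK[Z_i\cup W_j]$ having $|Z_i\cup W_j|=|Z_i|+|W_j|$ because the variables are disjoint), which is where I would concentrate the technical care.
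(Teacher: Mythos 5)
Your proposal is correct and follows essentially the same route as the paper: the upper bound is obtained, exactly as in the paper's proof, by noting that $\alpha_j(I)=0$ for $j\le r-1$ collapses $\beta_{r+1}^d(I)$ to $\alpha_{r+1}(I)-(d-r)\alpha_r(I)=n_1\cdots n_r\bigl(\frac{N+r}{2}-d\bigr)$, which is negative for $d>\frac{N+r}{2}$; the middle inequality is Proposition \ref{p1}; and the lower bound is the same disjoint-variables additivity argument combined with \cite[Theorem 2.2]{biro}, with the only cosmetic difference that you iterate the two-factor statement of \cite[Lemma 1.1]{apop} by induction where the paper cites the $r$-factor version \cite[Corollary 1.9(1)]{mirci} directly.
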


\begin{proof}
In order to prove the first inequality, let $d>\frac{n_1+\cdots+n_r+r}{2}$ be an integer. From Theorem \ref{d1} and
Lemma \ref{l32}(1,2,3) it follows that
\begin{align*}
& \beta_k^d(I)=0\text{ for }0\leq k\leq r-1,\;\beta_r^d(I)=\alpha_r(I)=n_1n_2\cdots n_r\text{ and }\\
& \beta_{r+1}^d(I) = \alpha_{r+1}(I)-(d-r)\alpha_r(I) = n_1n_2\cdots n_r\left(\frac{n_1+\cdots+n_r+r}{2}-d\right) < 0.
\end{align*}
Hence $\qdepth(I)\leq \frac{n_1+\cdots+n_r+r}{2}$.

The inequality $\qdepth(I) \geq \sdepth(I)$ follows from Proposition \ref{p1}, and the last inequality
follows from \cite[Corollary 1.9(1)]{mirci} and \cite[Theorem 2.2]{biro}.
\end{proof}

Based on our computer experiments, we propose the following conjecture:

\begin{conj}\label{conju}
With the above notations, we have
$$\qdepth(I)=\left\lfloor \frac{N+r}{2} \right\rfloor.$$
\end{conj}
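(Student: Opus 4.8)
The plan is to establish the lower bound $\qdepth(I) \geq \lfloor (N+r)/2 \rfloor$, since Theorem \ref{t31} already gives the matching upper bound. By Theorem \ref{d1} it suffices to set $d := \lfloor (N+r)/2 \rfloor$ and prove that $\beta_k^d(I) \geq 0$ for all $0 \leq k \leq d$. First I would derive a closed expression for $\beta_k^d(I)$ generalizing Lemma \ref{beta2}(2): applying \eqref{betak} together with Lemma \ref{l31} and Lemma \ref{magic}, one expects an inclusion–exclusion formula of the shape
\begin{equation}\label{betakgen}
\beta_k^d(I) = \sum_{\emptyset \neq T \subseteq [r]} (-1)^{|T|+1} \binom{\sum_{i\in T} n_i - d + k - 1}{k} + (-1)^{r}(\cdots),
\end{equation}
obtained by expanding $\alpha_k(I) = \binom{N}{k} - \sum_i \binom{N-n_i}{k} + \cdots$ via the complementary description $x_C \in I$ iff $C$ meets every block. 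I would first nail down this formula carefully, as it is the computational backbone of the whole argument.

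\smallskip

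The second step is to dispose of the small and large degrees. For $0 \leq k \leq r-1$ we have $\beta_k^d(I) = 0$ by Lemma \ref{l32}(1), and $\beta_r^d(I) = n_1\cdots n_r \geq 0$. At the top, the choice $d = \lfloor (N+r)/2 \rfloor$ is exactly the threshold at which $\beta_{r+1}^d(I) = n_1\cdots n_r \bigl( (N+r)/2 - d \bigr) \geq 0$ still holds, which is why this value of $d$ is forced and optimal. I would then try to reduce the problem to a uniform sign condition on the binomial terms in \eqref{betakgen}. The natural strategy is induction on $r$: the case $r=2$ is Theorem \ref{t22}, and one hopes to peel off one block $(x_{N-n_r+1},\ldots,x_N)$ using a short exact sequence or a direct manipulation of \eqref{betakgen} that expresses $\beta_k^d(I_{n_1,\ldots,n_r})$ in terms of the $\beta$-numbers of $I_{n_1,\ldots,n_{r-1}}$, monitoring how the shift in $d$ interacts with the parity argument that was decisive in the $r=2$ proof.

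\smallskip

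The main obstacle I expect is precisely the control of the alternating sum in \eqref{betakgen} for the intermediate range $r+2 \leq k \leq d$, where no single term dominates and the sign of each binomial $\binom{\sum_{i\in T} n_i - d + k - 1}{k}$ depends delicately on whether the top argument is negative (forcing the identity $\binom{-x}{k} = (-1)^k \binom{x+k-1}{k}$ to flip signs) and on the parity of $k$. In the $r=2$ case this was already handled only through an exhausting case split on the parity of $k$, the parity structure of $n,m$, and the comparison between $k$ and $a = |n-m|/2$ (see cases (i.1)–(ii.4) in the proof of Theorem \ref{t22}); for general $r$ the number of such cases grows and telescoping sums of the form $\binom{b+k-2}{k} - \binom{b+1}{k} = \sum_{\ell=b+1}^{b+k-3} \binom{\ell}{k-1}$ must be orchestrated across all subsets $T$. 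I therefore anticipate that the conjecture is hard exactly because the positivity is a genuinely global cancellation phenomenon rather than a term-by-term inequality; this is consistent with the authors' stating it as a conjecture and only proving the cases $r=2$ and ``at most one $n_i$ even,'' where the parity obstruction collapses and the dominant positive term can be isolated.
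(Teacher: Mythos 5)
The statement you are addressing is presented in the paper as a \emph{conjecture} (Conjecture \ref{conju}), not a theorem: the authors prove only the upper bound $\qdepth(I)\leq \left\lfloor \frac{N+r}{2}\right\rfloor$ (Theorem \ref{t31}) and observe that the matching lower bound is known in exactly two situations, namely $r=2$ (Theorem \ref{t22}) and the case where at most one of $n_1,\ldots,n_r$ is even, where the sandwich in Theorem \ref{t31} collapses because $\left\lceil \frac{n_1}{2}\right\rceil+\cdots+\left\lceil \frac{n_r}{2}\right\rceil = \left\lfloor \frac{N+r}{2}\right\rfloor$. Your proposal accurately reproduces this state of affairs: the reduction via Theorem \ref{d1} to the positivity of $\beta_k^d(I)$ for $d=\left\lfloor \frac{N+r}{2}\right\rfloor$, the vanishing for $k<r$, the values at $k=r$ and $k=r+1$ showing this $d$ is the critical threshold, and the inclusion--exclusion formula, which the paper records exactly as Lemma \ref{c37}(1), $\beta_k^d(I)=\sum_{J\subsetneq [r]}(-1)^{|J|}\binom{N-\sum_{i\in J}n_i-d+k-1}{k}$ (your guessed formula is this sum reindexed over complements $T=[r]\setminus J$). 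But none of this constitutes a proof of the conjecture.

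The genuine gap is the one you yourself flag: the positivity of the alternating sum in the intermediate range $r+2\leq k\leq d$ when two or more of the $n_i$ are even. You propose induction on $r$ with base case Theorem \ref{t22}, but you do not exhibit any recursion expressing $\beta_k^d(I_{n_1,\ldots,n_r})$ in terms of the $\beta$-numbers of $I_{n_1,\ldots,n_{r-1}}$, and it is not clear that the required shift in $d$ would preserve the inductive hypothesis; the $r=2$ argument in the paper rests on a parity case analysis (cases (i.1)--(ii.4)) with no evident $r$-fold analogue, precisely because for $r\geq 3$ several of the binomial coefficients $\binom{N-\sum_{i\in J}n_i-d+k-1}{k}$ can be simultaneously large and of competing signs. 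So your text is a correct roadmap and obstacle report rather than a proof: it neither establishes nor refutes the statement, which the paper itself leaves open. The only rigorously established content is what Theorems \ref{t31} and \ref{t22} already provide.
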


Note that, according to Theorem \ref{t22}, Conjecture \ref{conju} holds for $r=2$.
Also, according to Theorem \ref{t31}, Conjecture \ref{conju} is true when at most one 
of the numbers $n_1,\ldots,n_r$ is even.



\begin{prop}\label{proo}
With the above notations, we have that 
$$N-\min\limits_{i=1}^r{n_i}\geq \qdepth(S/I)\geq \sdepth(S/I)\geq \left\lceil \frac{n_1}{2} \right\rceil + \cdots + \left\lceil \frac{n_r}{2} \right\rceil
 - \min\limits_{i=1}^r \left\lceil \frac{n_i}{2} \right\rceil.$$
\end{prop}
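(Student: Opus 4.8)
The plan is to prove the three inequalities separately; the middle one, $\qdepth(S/I)\geq\sdepth(S/I)$, is immediate from Proposition \ref{p1} applied with $J=S$, so the real work lies in the two outer bounds. For the upper bound $\qdepth(S/I)\leq N-\min_i n_i$, set $p:=N-\min_i n_i$. Since the subsets with $x_C\in I$ and those with $x_C\notin I$ partition $2^{[N]}$, we have $\alpha_k(S/I)=\binom{N}{k}-\alpha_k(I)$ for every $k$; by Lemma \ref{l32}(4) this gives $\alpha_k(S/I)=0$ for all $k\geq p+1$, while $\alpha_0(S/I)=1$. Now suppose, for contradiction, that $\qdepth(S/I)\geq d$ for some $d\geq p+1$. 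By Theorem \ref{d1}, $\beta_k^d(S/I)\geq 0$ for all $0\leq k\leq d$, and the inversion formula \eqref{alfak} in degree $k=p+1\leq d$ reads
$$0=\alpha_{p+1}(S/I)=\sum_{j=0}^{p+1}\binom{d-j}{p+1-j}\,\beta_j^d(S/I).$$
This is a sum of nonnegative terms whose binomial coefficients $\binom{d-j}{p+1-j}$ are all strictly positive (because $0\leq p+1-j\leq d-j$), so each $\beta_j^d(S/I)=0$ for $0\leq j\leq p+1$. In particular $\beta_0^d(S/I)=\alpha_0(S/I)=1$, contradicting $\beta_0^d(S/I)=0$. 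Hence $\qdepth(S/I)\leq p=N-\min_i n_i$.

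For the lower bound on $\sdepth(S/I)$, the plan is to peel off one prime factor at a time and induct on $r$. Writing $I=\mathfrak p_1 S\cap (I_{n_2,\ldots,n_r})S$, with $\mathfrak p_1=(x_j:j\in B_1)\subset S_1:=K[B_1]$ and $I_{n_2,\ldots,n_r}\subset S':=K[B_2\cup\cdots\cup B_r]$ on disjoint variable sets, \cite[Theorem 1.3(2)]{mirci} gives
$$\sdepth(S/I)\geq\min\Big\{\sdepth(S/\mathfrak p_1 S),\ \sdepth_{S_1}(\mathfrak p_1)+\sdepth_{S'}(S'/I_{n_2,\ldots,n_r})\Big\}.$$
Here $S/\mathfrak p_1 S\cong S'$ is a polynomial ring, so $\sdepth(S/\mathfrak p_1 S)=N-n_1$; we have $\sdepth_{S_1}(\mathfrak p_1)=\lceil n_1/2\rceil$ by \cite[Theorem 2.2]{biro}; and the base case $r=1$ is $S/\mathfrak p\cong K$ with Stanley depth $0$. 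Iterating this splitting (equivalently, invoking the $S/I$-analogue of \cite[Corollary 1.9]{mirci} together with \cite[Theorem 2.2]{biro}) reduces the claim to an inductive estimate in which the $i$-th peeling contributes a ceiling $\lceil n_i/2\rceil$, capped by the quantity $N-(n_1+\cdots+n_i)=n_{i+1}+\cdots+n_r$.

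The main obstacle is exactly the bookkeeping of these caps: one must choose the order in which the primes are peeled so that the accumulated bound is the full sum $\sum_i\lceil n_i/2\rceil$ with a single ceiling term removed, and so that this removed ceiling is the minimal one claimed. Ordering the blocks by decreasing size renders most caps harmless, since $n_{i+1}+\cdots+n_r\geq\sum_{j>i}\lceil n_j/2\rceil$; the binding constraints arise from the final peeling steps, where only the smallest blocks remain, and it is precisely there that one must check whether the ceiling dropped may be taken to be $\min_i\lceil n_i/2\rceil$ rather than a larger one. I expect this terminal analysis — deciding which ceiling is forfeited once the small blocks are reached — to be the crux of the proof, with the upper bound and Proposition \ref{p1} entirely routine by comparison.
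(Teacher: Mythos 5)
Your handling of the first two inequalities is correct, and your upper-bound argument is actually more self-contained than the paper's: the paper deduces $\qdepth(S/I)\leq N-\min_i n_i$ from Lemma \ref{l32}(4) together with a citation of \cite[Lemma 1.3]{lucrare2}, whereas you reprove the needed fact on the spot. Your contradiction is sound: with $p=N-\min_i n_i$ we have $\alpha_k(S/I)=0$ for $k\geq p+1$, all coefficients $\binom{d-j}{p+1-j}$ with $j\leq p+1\leq d$ are strictly positive, Theorem \ref{d1} forces $\beta_j^d(S/I)\geq 0$, and $\beta_0^d(S/I)=\alpha_0(S/I)=1$ kills the sum in \eqref{alfak}. (In passing, your version fixes a slip in the paper's proof, which writes the vanishing threshold with $\max_i n_i$ although Lemma \ref{l32}(4) only gives it from $N-\min_i n_i+1$ on, exactly as you use it.)

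The lower bound, however, is where your proposal stops being a proof, and the ``terminal analysis'' you defer is not routine bookkeeping: it cannot be carried out for the statement as printed. Your recursion from \cite[Theorem 1.3(2)]{mirci} forfeits exactly one ceiling per unrolled term (the quotient factor $S_j/\mathfrak p_j$ has Stanley depth $0$), and blocks not yet peeled enter with their full size; what one can always secure by peeling a block of maximal size is $\sdepth(S/I)\geq\sum_i\lceil n_i/2\rceil-\max_i\lceil n_i/2\rceil$, i.e.\ the \emph{largest} ceiling is the one dropped. The $-\min$ version you were trying to reach is in fact false: for $r=2$ and $(n_1,n_2)=(4,1)$ it would assert $\sdepth(S/I)\geq\lceil 4/2\rceil+\lceil 1/2\rceil-1=2$, contradicting $\sdepth(S/I)\leq m=1$ from Proposition \ref{p21}(1) (and one checks directly that $\sdepth(S/I)=1$ here). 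So the subtracted $\min_i\lceil n_i/2\rceil$ in the statement is a typo for $\max_i\lceil n_i/2\rceil$; the paper's own ``proof'' of this inequality is a bare citation of \cite[Corollary 1.9]{mirci} and \cite[Theorem 2.2]{biro}, which deliver precisely the $\max$ version. With that correction your induction closes easily: order the blocks so that $n_r=\max_i n_i$ and set $I'=I_{n_1,\ldots,n_{r-1}}\subset S'$; then
$$\sdepth(S/I)\ \geq\ \min\Bigl\{\sdepth_{S'}(S'/I')+n_r,\ \sum_{i=1}^{r-1}\Bigl\lceil\frac{n_i}{2}\Bigr\rceil\Bigr\},$$
where the second entry uses $\sdepth_{S'}(I')\geq\sum_{i<r}\lceil n_i/2\rceil$ from \cite[Corollary 1.9(1)]{mirci} and \cite[Theorem 2.2]{biro}; by the inductive hypothesis and $n_r\geq\max_{i<r}\lceil n_i/2\rceil$, the first entry is at least $\sum_{i<r}\lceil n_i/2\rceil$ as well, and this common value equals $\sum_{i=1}^r\lceil n_i/2\rceil-\max_i\lceil n_i/2\rceil$.
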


\begin{proof}
From Lemma \ref{l32}(4) it follows that 
$$\alpha_k(S/I)=0\text{ for all }k\geq N-\max\limits_{i=1}^r{n_i}+1.$$
Hence, from \cite[Lemma 1.3]{lucrare2}, we get the first inequality.

The second inequality follows from \cite[Corollary 1.9]{mirci} and \cite[Theorem 2.2]{biro}.
\end{proof}

Note that, Proposition \ref{proo} reproves the fact that $\qdepth(S/I)\geq \depth(S/I)=r-1$.

\begin{lema}\label{l36}
For any $0\leq k\leq N$, we have that:
\begin{enumerate}
\item[(1)] $\alpha_k(I)=\sum_{J\subseteq [r]} (-1)^{|J|} \binom{N-\sum_{i\in J}n_i}{k}$.
\item[(2)] $\alpha_k(S/I)=\sum_{\emptyset \neq J\subseteq [r]} (-1)^{|J|+1} \binom{N-\sum_{i\in J}n_i}{k}$.
\end{enumerate}
\end{lema}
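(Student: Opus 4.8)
The plan is to recast $\alpha_k(I)$ as a purely combinatorial count and then run inclusion--exclusion over the $r$ blocks of variables. Write $B_j:=\{n_1+\cdots+n_{j-1}+1,\ldots,n_1+\cdots+n_j\}$ for the index set of the $j$-th prime, so that $[N]=B_1\sqcup\cdots\sqcup B_r$ with $|B_j|=n_j$ and $I=\bigcap_{j=1}^{r}(x_i:i\in B_j)$. A squarefree monomial $x_A$ lies in the prime $(x_i:i\in B_j)$ precisely when $A\cap B_j\neq\emptyset$, so $x_A\in I$ if and only if $A$ meets every block. Thus $\alpha_k(I)$ is exactly the number of $k$-subsets $A\subseteq[N]$ with $A\cap B_j\neq\emptyset$ for all $j\in[r]$.

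Next I would count these subsets by inclusion--exclusion applied to the \emph{bad} events $E_j:=\{A:A\cap B_j=\emptyset\}$. For a fixed $J\subseteq[r]$, a set $A$ avoids all blocks indexed by $J$ iff $A\subseteq[N]\setminus\bigcup_{j\in J}B_j$, and the latter has cardinality $N-\sum_{i\in J}n_i$; hence there are $\binom{N-\sum_{i\in J}n_i}{k}$ such $k$-subsets. Inclusion--exclusion then gives
$$\alpha_k(I)=\sum_{J\subseteq[r]}(-1)^{|J|}\binom{N-\sum_{i\in J}n_i}{k},$$
which is formula (1). The only delicate term is $J=[r]$, which contributes $\binom{0}{k}$: this vanishes for $k\geq1$ but equals $1$ when $k=0$, and is exactly what forces the identity $\alpha_0(I)=0$ to come out correctly, so one must take the index set to be all of $[r]$ (rather than the proper subsets) for the identity to be valid at $k=0$.

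Finally, for (2) I would pull the $J=\emptyset$ summand $\binom{N}{k}$ out of (1) and use complementary counting. Since every $k$-subset either meets all blocks or misses at least one, $\alpha_k(I)+\alpha_k(S/I)=\binom{N}{k}$; subtracting $\binom{N}{k}$ from (1) and changing sign repackages the remaining alternating sum over the nonempty $J$ as $\sum_{\emptyset\neq J}(-1)^{|J|+1}\binom{N-\sum_{i\in J}n_i}{k}$, namely the inclusion--exclusion count of the $k$-subsets missing at least one block. I expect the main (indeed essentially the only) obstacle to be bookkeeping rather than substance: keeping the signs, the empty-set term, and the full-block term $J=[r]$ straight when moving between the two alternating sums, and in particular being careful about which of $\alpha_k(I)$ and $\alpha_k(S/I)$ each sum produces, since this is precisely what the sign flip and the removal of the $J=\emptyset$ term encode. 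As a sanity check I would specialize to $r=2$ and compare with Lemma \ref{alfaa}, and verify $\alpha_r(I)=n_1\cdots n_r$ against Lemma \ref{l32}(2).
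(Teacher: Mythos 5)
Your proof is correct and takes essentially the same route as the paper: both run inclusion--exclusion over the $r$ blocks, you by counting the $k$-subsets of $[N]$ meeting every block directly, the paper by first decomposing over the degree tuples $(\ell_1,\ldots,\ell_r)$ (its Lemma \ref{l31}) and then collapsing each term with a Vandermonde summation. Your two side observations are also right: as printed, the sum over proper subsets $J\subsetneq[r]$ evaluates to $(-1)^{r+1}$ rather than $0$ at $k=0$, so either the term $J=[r]$ must be included or one must restrict to $k\geq 1$; and part (2) should read $\alpha_k(S/I)$, exactly as your complementary count produces.
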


\begin{proof}
(1) For all $1\leq i\leq r$ we let:
$$A_i=\{(\ell_1,\ldots,\ell_r)\;:\;\ell_1+\cdots+\ell_r=k,\ell_i=0\text{ and }\ell_j\geq 0\text{ for }j\neq i\}.$$
Also, we consider the set:
$$A=\{(\ell_1,\ldots,\ell_r)\;:\;\ell_1+\cdots+\ell_r=k\text{ and }\ell_i\geq 0\text{ for all }1\leq i\leq r\}.$$
For any nonempty subset $J\subset [r]$, we let $A_J=\bigcup_{i\in J}A_i$. Also, we denote $A_{\emptyset}=A$.
From Lemma \ref{l31} it follows that 
\begin{equation}\label{ciocio}
\alpha_k(I)=\sum_{(\ell_1,\ldots,\ell_r)\in A_{\emptyset}\setminus(\bigcup_{i=1}^r A_i)} \binom{n_1}{\ell_1}\cdots \binom{n_r}{\ell_r}.
\end{equation}
Note that this equality holds also for $k<r$ as both terms are zero in this case. It is well known that
$$\sum_{(\ell_1,\ldots,\ell_r)\in A} \binom{n_1}{\ell_1}\cdots \binom{n_r}{\ell_r} = \binom{n_1+\cdots+n_r}{k}=\binom{N}{k}.$$
Similarly, if $J\subset [r]$ then
\begin{equation}\label{ciocioc}
\sum_{(\ell_1,\ldots,\ell_r)\in \bigcap_{i\in J}A_i} \binom{n_1}{\ell_1}\cdots \binom{n_r}{\ell_r} = \binom{N-\sum_{i\in J}n_i}{k}.
\end{equation}
From \eqref{ciocio} and \eqref{ciocioc}, using the inclusion exclusion principle, we get the required conclusion.

(2) Follows from (1) and the fact that $\alpha_k(S/I)=\binom{N}{k}-\alpha_k(I)$.
\end{proof}

Note that Lemma \ref{l36} generalizes Lemma \ref{alfaa}(2,3).
From Lemma \ref{l36} and Lemma \ref{magic} we get the following generalization of Lemma \ref{beta2}:

\begin{lema}\label{c37}
For any $0\leq k\leq d \leq N$, we have that:
\begin{enumerate}
\item[(1)] $\beta_k^d(I)=\sum\limits_{J\subseteq [r]} (-1)^{|J|} \binom{N-\sum_{i\in J}n_i-d+k-1}{k}$.
\item[(2)] $\beta_k^d(S/I)=\sum\limits_{\emptyset \neq J\subseteq [r]} (-1)^{|J|+1} \binom{N-\sum_{i\in J}n_i-d+k-1}{k}$.
\end{enumerate}
\end{lema}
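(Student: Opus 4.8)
The plan is to run the reduction advertised in the sentence preceding the statement: substitute the closed formulas for the face numbers $\alpha_j$ given by Lemma \ref{l36} into the defining expression \eqref{betak} for $\beta_k^d$, interchange the two finite sums, and collapse each resulting inner sum with the Chu--Vandermonde evaluation of Lemma \ref{magic}.

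For part (1) I would begin with
$$\beta_k^d(I)=\sum_{j=0}^k (-1)^{k-j}\binom{d-j}{k-j}\alpha_j(I)$$
and insert the inclusion--exclusion expansion $\alpha_j(I)=\sum_{J\subseteq [r]}(-1)^{|J|}\binom{N-\sum_{i\in J}n_i}{j}$ furnished by Lemma \ref{l36}(1). As both ranges are finite, I would reverse the order of summation, isolating for each fixed $J$ the inner sum
$$\sum_{j=0}^k (-1)^{k-j}\binom{d-j}{k-j}\binom{N-\sum_{i\in J}n_i}{j},$$
which is the left-hand side of Lemma \ref{magic} with the parameter $n$ specialized to $n_J:=N-\sum_{i\in J}n_i$; it therefore equals $\binom{n_J-d+k-1}{k}$. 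Restoring the outer alternating sum over $J$ then produces the claimed identity. Part (2) proceeds identically, starting instead from the companion expansion of $\alpha_j(S/I)$ recorded in Lemma \ref{l36}(2) and carrying the sign $(-1)^{|J|+1}$ through the same interchange and the same application of Lemma \ref{magic}.

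The only delicate point is to confirm that Lemma \ref{magic} may be applied to every summand, since it requires its upper entry to be a nonnegative integer: here $n_J=N-\sum_{i\in J}n_i\geq 0$ for all $J\subseteq [r]$, with equality precisely when $J=[r]$. The extreme value $n_J=0$ is still permitted by Lemma \ref{magic}, and the term it yields, $\binom{-d+k-1}{k}=(-1)^k\binom{d}{k}$, is exactly what recovers the $\pm\binom{d}{k}$ summands appearing in the base case Lemma \ref{beta2}; in particular the full index $J=[r]$ must be kept in the sums. Once this is noted, I anticipate no real obstacle: after the double sum is written out, the argument is a uniform termwise application of a single binomial identity, so the entire content of the lemma is the sign bookkeeping together with the verification that the hypothesis of Lemma \ref{magic} holds for each $J$.
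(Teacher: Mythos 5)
Your proof is correct and is precisely the paper's argument: insert the inclusion--exclusion formulas of Lemma \ref{l36} for $\alpha_j$ into \eqref{betak}, interchange the two finite sums, and evaluate each inner sum via Lemma \ref{magic} with its parameter specialized to $n_J=N-\sum_{i\in J}n_i\geq 0$. Your observation that the full index set $J=[r]$ must be retained is a worthwhile catch: its contribution $(-1)^{r}\binom{-d+k-1}{k}=(-1)^{r+k}\binom{d}{k}$ (respectively $(-1)^{r+1+k}\binom{d}{k}$ in part (2)) is exactly what reproduces the $\pm\binom{d}{k}$ terms of Lemma \ref{beta2} in the case $r=2$, so the symbols $\subsetneq$ and $[n]$ in the statement (and already in Lemma \ref{l36}) should be read as $\subseteq$ and $[r]$.
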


\begin{prop}\label{percolati}
With the above notations, we have that:
\begin{align*}
(1)\; \qdepth(I)=\max\{ & d\;:\;  \left\lceil \frac{n_1}{2} \right\rceil +
 \cdots + \left\lceil \frac{n_r}{2} \right\rceil \leq  d\leq \left\lfloor \frac{N+r}{2} \right\rfloor\text{ and} \\
& \sum\limits_{J\subseteq [r]} (-1)^{|J|} \binom{N-\sum_{i\in J}n_i-d+k-1}{k}\geq 0\text{ for all } r \leq k\leq d \}. \\
(2)\;\qdepth(S/I)=\max\{ & d\;:\; \left\lceil \frac{n_1}{2} \right\rceil + \cdots + \left\lceil \frac{n_r}{2} \right\rceil
 - \min\limits_{i=1}^r \left\lceil \frac{n_i}{2} \right\rceil \leq d \leq N - \min\limits_{i=1}^r{n_i} \text{ and} \\
& \sum\limits_{\emptyset \neq J\subseteq [r]} (-1)^{|J|+1} \binom{N-\sum_{i\in J}n_i-d+k-1}{k}\geq 0\text{ for all } r \leq k\leq d \}.
\end{align*}
\end{prop}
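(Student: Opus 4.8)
The plan is to read off both equalities directly from the definition of Hilbert depth in Theorem \ref{d1}, after substituting the closed forms for the $\beta$-numbers from Lemma \ref{c37}. Recall that Theorem \ref{d1} expresses $\qdepth(I)$ as the largest $d$ for which $\beta_k^d(I)\geq 0$ holds for \emph{all} $0\leq k\leq d$, and likewise for $S/I$. By Lemma \ref{c37}(1) the quantity $\beta_k^d(I)$ is exactly the alternating binomial sum appearing in (1), and by Lemma \ref{c37}(2) the analogous sum equals $\beta_k^d(S/I)$. Hence the content of the proposition is really two reductions: that the positivity conditions for the low degrees $0\leq k\leq r-1$ may be dropped (so that only $r\leq k\leq d$ must be tested), and that the search for the maximal admissible $d$ may be confined to the stated interval.

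For part (1) I would first dispose of the low degrees: by Lemma \ref{l32}(1) we have $\alpha_k(I)=0$ for all $k\leq r-1$, and since $\beta_k^d(I)$ is by \eqref{betak} a linear combination of $\alpha_0(I),\dots,\alpha_k(I)$, it follows that $\beta_k^d(I)=0$ for every $d$ whenever $0\leq k\leq r-1$. Thus these conditions are vacuously nonnegative and contribute nothing, so ``$\beta_k^d(I)\geq 0$ for all $0\leq k\leq d$'' is equivalent to ``$\beta_k^d(I)\geq 0$ for all $r\leq k\leq d$''. To bound the range of $d$, I would invoke Theorem \ref{t31}, which gives $\lceil n_1/2\rceil+\cdots+\lceil n_r/2\rceil\leq\qdepth(I)\leq\lfloor(N+r)/2\rfloor$; since each $\lceil n_i/2\rceil\geq 1$ the lower endpoint is at least $r$, so intersecting the positivity set with this interval leaves the true value $\qdepth(I)$ unchanged while excluding only candidates that cannot be maximal. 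Combining the two reductions produces formula (1).

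The argument for (2) is parallel, using Lemma \ref{c37}(2) together with the range $\lceil n_1/2\rceil+\cdots+\lceil n_r/2\rceil-\min_{i}\lceil n_i/2\rceil\leq\qdepth(S/I)\leq N-\min_{i}n_i$ supplied by Proposition \ref{proo}. The step I expect to require the most care is again the elimination of the low degrees $0\leq k\leq r-1$, which is now genuinely different: here $\beta_k^d(S/I)$ does not vanish, because $\alpha_k(I)=0$ forces $\alpha_k(S/I)=\binom{N}{k}$ for $k\leq r-1$. For such $k$ I would apply \eqref{betak} and Lemma \ref{magic} (with $n=N$) to obtain $\beta_k^d(S/I)=\binom{N-d+k-1}{k}$, and then observe that $d\leq N$ forces $N-d+k-1\geq k-1\geq 0$, so this binomial coefficient is nonnegative. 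Hence the small-degree conditions hold automatically, the characterization collapses to testing $r\leq k\leq d$ inside the interval of Proposition \ref{proo}, and formula (2) follows. The only subtlety to keep in mind throughout is that the nonnegativity of $\binom{N-d+k-1}{k}$ relies on the a priori bound $d\leq N$, which is available since the Hilbert depth never exceeds the number of variables.
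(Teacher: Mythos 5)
Your argument is correct and follows essentially the same route as the paper, which simply cites Theorem \ref{t31} (resp.\ Proposition \ref{proo}) together with Lemma \ref{c37} and Theorem \ref{d1}; you have merely made explicit the two reductions the paper leaves implicit (vanishing of $\beta_k^d(I)$ for $k<r$, and nonnegativity of $\beta_k^d(S/I)=\binom{N-d+k-1}{k}$ for $k<r$). The only cosmetic point is that your inequality $N-d+k-1\geq k-1\geq 0$ fails for $k=0$, but there $\beta_0^d(S/I)=\alpha_0(S/I)=1$ anyway, so nothing is lost.
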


\begin{proof}
(1) It follows from Theorem \ref{t31} and Lemma \ref{c37}(1).

(1) It follows from Proposition \ref{proo} and Lemma \ref{c37}(2).
\end{proof}

\begin{lema}\label{liema}
Let $d\geq r$. We have that $$\beta^d_r(S/I)=\binom{N-d+r-1}{r}-n_1n_2\cdots n_r.$$
\end{lema}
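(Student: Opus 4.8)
The plan is to compute $\beta_r^d(S/I)$ directly from its definition \eqref{betak}, exploiting the fact that the low-degree face counts $\alpha_j(I)$ vanish for $j<r$. Writing out \eqref{betak} with $k=r$ gives
$$\beta_r^d(S/I)=\sum_{j=0}^r (-1)^{r-j}\binom{d-j}{r-j}\alpha_j(S/I).$$
First I would replace $\alpha_j(S/I)$ by $\binom{N}{j}-\alpha_j(I)$, which is the defining relation between the face counts of $S/I$ and those of $I$ (the same relation used in Lemma \ref{l36}(2)).

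Next I would invoke Lemma \ref{l32}(1,2), which tells us that $\alpha_j(I)=0$ for every $j\leq r-1$ and $\alpha_r(I)=n_1n_2\cdots n_r$. Consequently the only contribution of the $\alpha_j(I)$ terms comes from $j=r$, where the coefficient $(-1)^{r-r}\binom{d-r}{0}$ equals $1$. This isolates
$$\beta_r^d(S/I)=\left(\sum_{j=0}^r (-1)^{r-j}\binom{d-j}{r-j}\binom{N}{j}\right)-n_1n_2\cdots n_r.$$
Finally I would evaluate the remaining alternating sum by Lemma \ref{magic}, applied with $n$ replaced by $N$ and $k=r$, which gives exactly $\binom{N-d+r-1}{r}$, yielding the claimed formula.

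There is essentially no genuine obstacle here: the argument is just a bookkeeping of the vanishing of $\alpha_j(I)$ in degrees below $r$, followed by a single application of the Chu–Vandermonde summation already packaged in Lemma \ref{magic}. If one prefers to avoid the explicit manipulation, the cleanest alternative is to note that $\alpha_j(S)=\alpha_j(I)+\alpha_j(S/I)$ forces $\beta_r^d(S)=\beta_r^d(I)+\beta_r^d(S/I)$; since $\beta_r^d(S)=\binom{N-d+r-1}{r}$ by Lemma \ref{magic} (as $\alpha_j(S)=\binom{N}{j}$) and $\beta_r^d(I)=\alpha_r(I)=n_1n_2\cdots n_r$ exactly as computed in the proof of Theorem \ref{t31}, subtracting gives the stated identity at once.
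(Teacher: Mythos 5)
Your proof is correct and follows essentially the same route as the paper: both reduce to the values $\alpha_j(S/I)=\binom{N}{j}$ for $j\leq r-1$ and $\alpha_r(S/I)=\binom{N}{r}-n_1n_2\cdots n_r$ from Lemma \ref{l32}, substitute into \eqref{betak}, and evaluate the alternating sum $\sum_{j=0}^r(-1)^{r-j}\binom{d-j}{r-j}\binom{N}{j}=\binom{N-d+r-1}{r}$ via Lemma \ref{magic}. Your closing remark about additivity of $\beta_r^d$ is a harmless reformulation of the same computation.
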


\begin{proof}
From Lemma \ref{l32} it follows that
$$\alpha_k(S/I)=\binom{N}{k}\text{ for }k\leq r-1,\;\alpha_{r}(S/I)=\binom{N}{r}-n_1n_2\cdots n_r.$$
Hence, the required result follows from \eqref{betak} and Lemma \ref{magic}.
\end{proof}

\begin{prop}\label{metal}
With the above notations, we have that:
$$\qdepth(S/I)\leq \min\{d\geq r\;:\;\binom{N-d+r-1}{r} < n_1n_2\cdots n_r\}-1.$$
\end{prop}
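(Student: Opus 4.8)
The plan is to reduce everything to the single number $\beta_r^d(S/I)$, whose value is already recorded in Lemma \ref{liema}, and then to invoke the characterization of the Hilbert depth from Theorem \ref{d1}. Write $d_0:=\min\{d\geq r\;:\;\binom{N-d+r-1}{r}<n_1n_2\cdots n_r\}$ for the quantity appearing on the right-hand side before subtracting $1$; this $d_0$ is well defined and satisfies $d_0\leq N$, since for $d=N$ the upper index $N-d+r-1=r-1<r$ forces $\binom{r-1}{r}=0<n_1n_2\cdots n_r$. The goal is to show $\qdepth(S/I)\leq d_0-1$.

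First I would observe that, by Theorem \ref{d1}, if $\qdepth(S/I)=D$ and $D\geq r$, then in particular $\beta_r^D(S/I)\geq 0$, and by Lemma \ref{liema} this is equivalent to $\binom{N-D+r-1}{r}\geq n_1n_2\cdots n_r$. The key structural remark is that the map $d\mapsto \binom{N-d+r-1}{r}$ is non-increasing for $d$ in the range $r\leq d\leq N$: as $d$ grows by one the upper index $N-d+r-1$ drops by one while staying $\geq r-1$, and $\binom{m}{r}$ is non-decreasing in $m$ on $\{r-1,r,r+1,\ldots\}$. Consequently the set $\{d\geq r\;:\;\binom{N-d+r-1}{r}<n_1n_2\cdots n_r\}$ is upward closed, so every $d\geq d_0$ lies in it while every $d$ with $r\leq d<d_0$ does not.

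Combining these two facts then yields the bound. If $D\geq r$, then $\binom{N-D+r-1}{r}\geq n_1n_2\cdots n_r$ shows that $D$ is not an element of the upward-closed set, hence $D<d_0$, i.e. $D\leq d_0-1$. If instead $D<r$, then since $d_0\geq r$ by construction we trivially have $D\leq r-1\leq d_0-1$. In either case $\qdepth(S/I)\leq d_0-1$, which is exactly the claimed inequality.

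I expect the only point requiring genuine care to be the monotonicity of $d\mapsto\binom{N-d+r-1}{r}$ together with the edge behaviour of the binomial coefficient near the upper index $r-1$; once this is settled, the statement follows as a direct application of Lemma \ref{liema} and Theorem \ref{d1}, so the proposition is essentially a corollary of the former.
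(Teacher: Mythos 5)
Your proof is correct and follows essentially the same route as the paper: both rest on Lemma \ref{liema}, the observation that $d=N$ witnesses well-definedness of the minimum, and Theorem \ref{d1} applied to $\beta_r^d(S/I)$. You make explicit the monotonicity of $d\mapsto\binom{N-d+r-1}{r}$ (which the paper leaves implicit in its ``thus $\qdepth(S/I)\leq q-1$''), and that is a welcome clarification rather than a deviation.
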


\begin{proof}
First of all, note that, according to \eqref{betak2}, we have
$$\beta_0^N(S/I)=1\text{ and }\beta_k^N(S/I)=0\text{ for all }1\leq k\leq r-1.$$
Moreover, according to Lemma \ref{liema}, \eqref{betak} and \eqref{betak2}, we have
$$\beta_r^N(S/I) = \binom{N-N+r-1}{r} - n_1n_2\cdots n_r = -n_1n_2\cdots n_r<0.$$
Therefore, we have that $$q:=\min\{d\geq r\;:\;\binom{N-d+r-1}{r} < n_1n_2\cdots n_r\}$$
is well defined and $q\leq N$. Now, it is enough to notice that, from above, 
$\beta_r^q(S/I)<0$ and thus $\qdepth(S/I)\leq q-1$, as required.
\end{proof}

\begin{lema}\label{lamaie}
We have that $$\binom{N-d+r-1}{r}\geq n_1n_2\cdots n_r
\text{ for all }d\leq N - \left\lceil \sqrt[r]{r!n_1n_2\cdots n_r} \right\rceil.$$
\end{lema}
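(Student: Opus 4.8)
The plan is to reduce the claim to a single value of $d$ by exploiting monotonicity, and then to compare the resulting binomial coefficient with $m^r/r!$, where I write $m:=\left\lceil \sqrt[r]{r!\,n_1n_2\cdots n_r} \right\rceil$. The key observation is that the map $d\mapsto \binom{N-d+r-1}{r}$ is non-increasing in $d$: as $d$ grows the top argument $N-d+r-1$ shrinks, and $x\mapsto\binom{x+r-1}{r}$ is non-decreasing for integers $x\geq 0$ (it counts $r$-element multisets from $x$ symbols). Consequently, over the range $d\leq N-m$ the left-hand side attains its minimum at the right endpoint $d=N-m$, where $N-d+r-1=m+r-1$. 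Thus it suffices to establish the single inequality $\binom{m+r-1}{r}\geq n_1n_2\cdots n_r$.

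For this reduced inequality I would expand
$$\binom{m+r-1}{r}=\frac{1}{r!}\prod_{i=0}^{r-1}(m+i),$$
and bound each factor from below by $m$, so that $\prod_{i=0}^{r-1}(m+i)\geq m^r$ and hence $\binom{m+r-1}{r}\geq m^r/r!$. By the defining property of the ceiling, $m\geq \sqrt[r]{r!\,n_1n_2\cdots n_r}$, whence $m^r\geq r!\,n_1n_2\cdots n_r$ and $m^r/r!\geq n_1n_2\cdots n_r$. Chaining these gives $\binom{m+r-1}{r}\geq n_1n_2\cdots n_r$, which combined with the monotonicity step yields the lemma.

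The only points needing care are that the binomials are genuine counting quantities (i.e.\ the arguments are nonnegative), which I would verify from $n_i\geq 1$: this gives $m\geq 1$, so $m+r-1\geq r-1\geq 0$, and throughout the range $N-d\geq m\geq 1$ guarantees $N-d+r-1\geq 0$, legitimizing the monotonicity argument. I do not expect a genuine obstacle here; the crux is simply the clean product bound $\binom{m+r-1}{r}\geq m^r/r!$, which sidesteps any AM--GM considerations, together with the elementary ceiling estimate. The whole argument is therefore short and essentially computational once the reduction to $d=N-m$ is made.
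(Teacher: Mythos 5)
Your proof is correct and rests on the same core estimate as the paper's: bounding the rising factorial $\binom{x+r-1}{r}=\frac{1}{r!}\prod_{i=0}^{r-1}(x+i)\geq \frac{x^r}{r!}$ and then invoking $x\geq \sqrt[r]{r!\,n_1\cdots n_r}$. Your reduction to the single endpoint $d=N-m$ via monotonicity is a slightly cleaner packaging than the paper's parametrization $d=\lfloor aN\rfloor$, but the argument is essentially the same.
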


\begin{proof}
We assume that $d=\lfloor aN \rfloor $, where $a\in (0,1)$. Then
$$\binom{N-d+r-1}{r}=\frac{(N-d+r-1)(N-d+r-2)\cdots (N-d)}{r!}\geq $$
\begin{equation}\label{caca1}
 \geq \frac{(N-aN+r-1)(N-aN+r-2)\cdots(N-aN)}{r!} \geq \frac{N^r(1-a)^r}{r!}.
\end{equation}
On the other hand
\begin{equation}\label{caca2}
 \frac{N^r(1-a)^r}{r!} \geq n_1n_2\cdots n_r \Leftrightarrow (1-a)^r \geq \frac{r!n_1n_2\cdots n_r}{r!N^k} \Leftrightarrow 
a\leq 1-\frac{\sqrt[r]{r!n_1n_2\cdots n_r}}{N}
\end{equation}
The conclusion follows from \eqref{caca1} and \eqref{caca2}.
\end{proof}

\begin{obs}\rm
Note that, from the inequality of means, we have 
$$\sqrt[r]{r!n_1n_2\cdots n_r} \leq \frac{N\sqrt[r]{r!}}{r},$$ with equality for $n_1=n_2=\cdots=n_r=n=\frac{N}{r}$.
Therefore, from Lemma \ref{lamaie}, we have that
$$\binom{N-d+r-1}{r}\geq n^r\text{ for all }d \leq N\left( 1 - \sqrt[r]{\frac{r!}{r^r}}\right).$$
\end{obs}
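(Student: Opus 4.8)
The plan is to treat the remark as two linked assertions and prove each in turn: first the estimate furnished by the inequality of means, then the clean threshold on $d$ obtained by substituting that estimate into Lemma \ref{lamaie}.

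First I would apply the arithmetic--geometric mean inequality to the positive integers $n_1,\ldots,n_r$, which gives
$$\sqrt[r]{n_1n_2\cdots n_r}\leq \frac{n_1+\cdots+n_r}{r}=\frac{N}{r},$$
with equality precisely when $n_1=\cdots=n_r$. Multiplying both sides by the positive constant $\sqrt[r]{r!}$ yields the first displayed inequality
$$\sqrt[r]{r!\,n_1n_2\cdots n_r}\leq \frac{N\sqrt[r]{r!}}{r},$$
and the equality case transfers verbatim, so equality holds exactly when all the $n_i$ coincide, i.e. $n_1=\cdots=n_r=N/r=:n$.

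For the second assertion I would specialize to that equality case. When $n_1=\cdots=n_r=n=N/r$ one has $n_1\cdots n_r=n^r$ and $\sqrt[r]{r!\,n_1\cdots n_r}=n\sqrt[r]{r!}=\frac{N}{r}\sqrt[r]{r!}$, so Lemma \ref{lamaie} guarantees
$$\binom{N-d+r-1}{r}\geq n_1n_2\cdots n_r=n^r\quad\text{for all }d\leq N-\left\lceil n\sqrt[r]{r!}\right\rceil.$$
It then remains only to rewrite the bound on $d$ in closed form: using $n=N/r$ one computes
$$N-n\sqrt[r]{r!}=N-\frac{N\sqrt[r]{r!}}{r}=N\left(1-\frac{\sqrt[r]{r!}}{r}\right)=N\left(1-\sqrt[r]{\frac{r!}{r^{r}}}\right),$$
which is exactly the claimed range for $d$.

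The routine parts (the AM--GM step and the algebraic simplification $\sqrt[r]{r!}/r=\sqrt[r]{r!/r^{r}}$) are immediate. The one point that genuinely needs care is the passage from the integer ceiling-bound $d\leq N-\lceil n\sqrt[r]{r!}\rceil$ supplied by Lemma \ref{lamaie} to the real threshold $N(1-\sqrt[r]{r!/r^{r}})$: since the latter is in general irrational, this rewriting is legitimate only because $N$ is an integer and $d$ ranges over the integers, where the identity $N-\lceil x\rceil=\lfloor N-x\rfloor$ makes the two conditions on $d$ equivalent. I expect this (admittedly minor) integrality bookkeeping to be the main obstacle, everything else being a direct specialization of results already established.
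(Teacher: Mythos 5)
Your proposal is correct and follows exactly the route the paper intends (the remark is stated without a separate proof, being a direct combination of the AM--GM inequality with Lemma \ref{lamaie} in the case $n_1=\cdots=n_r=n=N/r$). Your extra care with the integrality step, namely that for integer $d$ the conditions $d\leq N-\lceil n\sqrt[r]{r!}\,\rceil$ and $d\leq N\bigl(1-\sqrt[r]{r!/r^r}\bigr)$ coincide because $N-\lceil x\rceil=\lfloor N-x\rfloor$, is a correct and worthwhile clarification of a point the paper leaves implicit.
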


\begin{prop}\label{betard}
With the above notations, we have that
$$\beta_r^d(S/I)\geq 0 \text{ for all }d\leq N - \left\lceil \sqrt[r]{r!n_1n_2\cdots n_r} \right\rceil.$$
\end{prop}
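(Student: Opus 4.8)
The plan is to combine Lemma \ref{liema} with Lemma \ref{lamaie} directly. Recall that Lemma \ref{liema} gives the exact identity
$$\beta_r^d(S/I)=\binom{N-d+r-1}{r}-n_1n_2\cdots n_r$$
for all $d\geq r$. Thus proving $\beta_r^d(S/I)\geq 0$ is equivalent to establishing the inequality $\binom{N-d+r-1}{r}\geq n_1n_2\cdots n_r$. But this latter inequality is precisely the content of Lemma \ref{lamaie}, valid for all $d\leq N-\left\lceil \sqrt[r]{r!n_1n_2\cdots n_r}\right\rceil$. So the proof is essentially a one-line citation: substitute the bound from Lemma \ref{lamaie} into the identity from Lemma \ref{liema}.

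Concretely, I would first invoke Lemma \ref{liema} to rewrite $\beta_r^d(S/I)$ as the difference $\binom{N-d+r-1}{r}-n_1n_2\cdots n_r$; this requires only that $d\geq r$, which I should briefly confirm holds under the hypothesis (since $\sqrt[r]{r!n_1n_2\cdots n_r}\geq \sqrt[r]{r!}\cdot 1$ forces the upper bound on $d$ to be at most $N-1$, and one can check the relevant range is nonempty and lies in $[r,N]$). Then I would apply Lemma \ref{lamaie} to conclude that the binomial term dominates the product, so the difference is nonnegative.

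I do not anticipate any genuine obstacle here, since the statement is an immediate corollary of the two preceding lemmas; the only minor care needed is verifying that the admissible range of $d$ in the hypothesis actually satisfies $d\geq r$ so that Lemma \ref{liema} applies (otherwise the formula for $\beta_r^d$ would not be the one we want). Once that bookkeeping is dispatched, the inequality follows by transitivity. In short, Proposition \ref{betard} is the arithmetic translation, via the exact expression for $\beta_r^d(S/I)$, of the purely combinatorial estimate already proved in Lemma \ref{lamaie}.
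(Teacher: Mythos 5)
Your proposal is correct and matches the paper's own proof, which likewise obtains the result by substituting the estimate of Lemma \ref{lamaie} into the identity of Lemma \ref{liema}. The extra remark about checking $d\geq r$ is harmless bookkeeping that the paper leaves implicit.
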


\begin{proof}
It follows from Lemma \ref{liema} and Lemma \ref{lamaie}.
\end{proof}

Proposition \ref{betard} allows us to conjecture that
$$\qdepth(S/I) \approx N - \left\lceil \sqrt[r]{r!n_1n_2\cdots n_r} \right\rceil.$$

\section{Applications}

\subsection*{The $m$-path ideal of a path graph}

Let $n\geq m\geq 1$ be two integers and 
$$I_{n,m}=(x_1x_2\cdots x_m,\;x_{2}x_3\cdots x_{m+1},\;\ldots,x_{n-m+1}\cdots x_n)\subset S=K[x_1,\ldots,x_n],$$
be the $m$-path ideal associated to the path graph of length $n$. Let $t\geq 1$. We define:
$$\varphi(n,m,t):=\begin{cases} n-t+2 - \left\lfloor \frac{n-t+2}{m+1} \right\rfloor - \left\lceil \frac{n-t+2}{m+1} \right\rceil,& t\leq n+1-m \\
m-1,& t > n+1-m \end{cases}.$$
According to \cite[Theorem 2.6]{lucrare1} we have that 
$$\sdepth(S/I_{n,m}^t)\geq\depth(S/I_{n,m}^t)=\varphi(n,m,t).$$
Assume that $t\leq n-m$ and let $S_{t+m}:=K[x_1,\ldots,x_{m+t}]$. We consider the ideal
$$U_{m,t}= (x_{i_1}\cdots x_{i_m}\;:\;i_j\equiv j(\bmod\; m),1\leq j\leq m) \subset S_{t+m}.$$
By Euclidean division, we write $t+m=am+b$, where $1\leq b\leq m$. According to the proof of \cite[Lemma 2.4]{lucrare1},
we have that
\begin{equation}
U_{m,t}=V_{m,1,a+1}\cap \cdots \cap V_{m,b,a+1} \cap V_{m,b+1,a} \cap \cdots \cap V_{m,m,a},\text{ where }V_{m,j,k}=(x_j,x_{j+m},\ldots,x_{j+(k-1)m}).
\end{equation}

\begin{prop}\label{p41}
With the above notations, we have that:
$$\sdepth(U_{m,t})\leq \qdepth(U_{m,t})\leq m+\left\lfloor \frac{t}{2} \right\rfloor.$$
\end{prop}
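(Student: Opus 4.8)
The plan is to recognize $U_{m,t}$ as an instance of the generalized intersection ideal $I_{n_1,\ldots,n_r}$ studied in Section 3, and then read off the bound directly from Theorem \ref{t31}. The presentation already displays $U_{m,t}$ as an intersection of the $m$ monomial prime ideals $V_{m,j,k}=(x_j,x_{j+m},\ldots,x_{j+(k-1)m})$, so the first thing I would verify is that these primes are supported on pairwise disjoint sets of variables. This is immediate: every index occurring in $V_{m,j,k}$ is congruent to $j$ modulo $m$, and as $j$ runs over $1,\ldots,m$ these residue classes are distinct, whence the supports are disjoint.

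Next I would check that the supports in fact partition $\{x_1,\ldots,x_{m+t}\}$, so that the ambient ring matches. Counting variables, the blocks with $j\leq b$ contribute $a+1$ variables each and the blocks with $j>b$ contribute $a$ variables each, for a total of $b(a+1)+(m-b)a=am+b=t+m$, which is exactly $N=m+t$ with $r=m$ factors. Since $\sdepth$ and $\qdepth$ are invariant under permutation of the variables, it does not matter that the supports of the $V_{m,j,k}$ are interleaved rather than consecutive: after relabelling, $U_{m,t}$ coincides with $I_{n_1,\ldots,n_m}$, where $b$ of the exponents $n_j$ equal $a+1$ and the remaining $m-b$ equal $a$.

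With this identification in hand, the conclusion is a direct application of the first inequality of Theorem \ref{t31} together with Proposition \ref{p1}. Indeed,
$$\sdepth(U_{m,t})\leq \qdepth(U_{m,t})\leq \left\lfloor \frac{N+r}{2}\right\rfloor = \left\lfloor \frac{(m+t)+m}{2}\right\rfloor = \left\lfloor \frac{2m+t}{2}\right\rfloor = m+\left\lfloor \frac{t}{2}\right\rfloor,$$
the last equality holding because $m$ is an integer. There is no genuine obstacle here: the only content is the structural observation that $U_{m,t}$ is an intersection of prime ideals on disjoint variable blocks, which places it squarely within the scope of Theorem \ref{t31}; the numerical simplification of the floor and the passage to $\sdepth$ via Proposition \ref{p1} are routine.
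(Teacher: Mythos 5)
Your proposal is correct and follows essentially the same route as the paper: both identify $U_{m,t}$ via the displayed decomposition into the primes $V_{m,j,k}$ as an instance of $I_{n_1,\ldots,n_r}$ with $N=t+m$ and $r=m$, apply Theorem \ref{t31} to get $\qdepth(U_{m,t})\leq\left\lfloor\frac{2m+t}{2}\right\rfloor=m+\left\lfloor\frac{t}{2}\right\rfloor$, and finish with Proposition \ref{p1}. Your extra verification that the supports are pairwise disjoint and partition the variable set is a detail the paper leaves implicit, but it is the same argument.
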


\begin{proof}
According to Theorem \ref{t31}, we have that
$$\qdepth(U_{m,t})\leq \left\lfloor \frac{m+t+m}{2} \right\rfloor = m + \left\lfloor \frac{t}{2} \right\rfloor.$$
Now, apply Proposition \ref{p1}.
\end{proof}



We recall the following well known results:

\begin{lema}\label{lima}
Let $I\subset S$ be a monomial ideal and $u\in S$ a monomial which do no belong to $I$. Then
\begin{enumerate}
\item[(1)] $\sdepth(S/(I:u))\geq \sdepth(S/I)$. (\cite[Proposition 2.7(2)]{mirci})
\item[(2)] $\sdepth(I:u)\geq \sdepth(I)$. (\cite[Proposition 1.3]{pop} arXiv version)
\end{enumerate}
\end{lema}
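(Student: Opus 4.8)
The plan is to give a uniform combinatorial proof of both inequalities using the Herzog--Vl\u adoiu--Zheng description of Stanley decompositions in terms of monomial Stanley spaces $mK[Z]$. Both parts follow the same pattern: transport an optimal Stanley decomposition of the ``source'' module along the injective multiplication-by-$u$ map, and check that every piece is carried to another Stanley space with the \emph{same} free variable set, so that the depth cannot drop.

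First, recall that for a monomial ideal $L\subset S$, a Stanley decomposition of $S/L$ (resp.\ of $L$) amounts to a partition of the monomial $K$-basis $\{x^v:\;x^v\notin L\}$ (resp.\ $\{x^v:\;x^v\in L\}$) into Stanley spaces $x^{c}K[Z]$, and $\sdepth$ is the maximum over such partitions of the minimum of the $|Z|$'s. Since $u=x^{a}$ is a monomial, the map $\mu_u:\;x^v\mapsto x^{v+a}$ is injective on monomials, and $x^v$ is a basis monomial of $S/(I:u)$ (that is, $x^v\notin(I:u)$) exactly when $x^{v+a}\notin I$; dually $x^v\in(I:u)$ exactly when $x^{v+a}\in I$. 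Thus $\mu_u$ embeds the basis of $S/(I:u)$ into the basis of $S/I$, and the basis of $I:u$ into the basis of $I$.

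Next, I would take an optimal Stanley decomposition, writing the relevant basis of $S/I$ (resp.\ of $I$) as $\bigsqcup_i x^{c_i}K[Z_i]$. For a fixed piece $x^{c}K[Z]$ I would compute its preimage $\{v:\;x^{v+a}\in x^{c}K[Z]\}$. The membership condition $x^{v+a}\in x^{c}K[Z]$ reads $v_j+a_j=c_j$ for every $j\notin Z$ and $v_j+a_j\ge c_j$ for every $j\in Z$. Hence the preimage is empty unless $c_j\ge a_j$ for all $j\notin Z$, in which case it equals the single Stanley space $x^{c'}K[Z]$, where $c'_j=c_j-a_j$ for $j\notin Z$ and $c'_j=\max(0,c_j-a_j)$ for $j\in Z$. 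The essential point is that the free set $Z$ is unchanged, so $|Z|$ is preserved.

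Because $\mu_u$ is injective and the $x^{c_i}K[Z_i]$ partition the source basis, their (nonempty) preimages partition the target basis; this yields a genuine Stanley decomposition of $S/(I:u)$ (resp.\ of $I:u$) whose minimal free-set size is at least $\min_i|Z_i|$. Taking the source decomposition to be optimal gives $\sdepth(S/(I:u))\ge\sdepth(S/I)$ and $\sdepth(I:u)\ge\sdepth(I)$, which are (1) and (2). The only delicate step is the exponent-vector computation identifying the preimage as a Stanley space with the same free set; handling the forced coordinates, and discarding the possibly empty pieces arising when $c_j<a_j$ off $Z$, is the main bookkeeping obstacle, but it is routine once the membership condition is written out.
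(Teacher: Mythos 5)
Your proof is correct. The paper itself does not prove this lemma: it is stated as a well-known result with citations to \cite[Proposition 2.7(2)]{mirci} and \cite[Proposition 1.3]{pop}, so there is no in-paper argument to compare against. Your argument --- pulling back an optimal Stanley decomposition along the injective multiplication map $x^v\mapsto x^{v+a}$, observing that $x^v\in(I:u)$ iff $x^{v+a}\in I$, and checking that the preimage of each Stanley space $x^cK[Z]$ is either empty or a Stanley space $x^{c'}K[Z]$ with the same free set $Z$ --- is exactly the standard proof given in those references, and the exponent bookkeeping (forced coordinates $v_j=c_j-a_j$ off $Z$, truncation $\max(0,c_j-a_j)$ on $Z$, discarding empty preimages) is handled correctly.
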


\begin{teor}\label{t41}
Let $n\geq m\geq 1$ and $t\geq 1$. Let $t_0:=\min\{t,n-m\}$. We have that
$$\sdepth(I_{n,m}^t)\leq \min\{ n-\left\lceil \frac{t_0}{2} \right\rceil, n-\left\lfloor \frac{n-t_0+1}{m+1} \right\rfloor+1\}.$$
\end{teor}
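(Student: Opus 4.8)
The plan is to exploit the inequality $\sdepth(I_{n,m}^t)\leq \sdepth(I_{n,m}^t:u)$ from Lemma \ref{lima}(2), choosing the monomial $u$ so that the colon ideal becomes one whose Stanley depth can already be estimated by the tools of Sections $1$ and $3$. The first step is to reduce the exponent to $t_0=\min\{t,n-m\}$: coloning by a power of the corner generator $g=x_1x_2\cdots x_m$ absorbs the surplus factors, and a short computation shows that the colon stabilizes precisely once the exponent reaches $n-m$. Hence it suffices to prove both estimates in the range $t\leq n-m$, where $t_0=t$; this is exactly what forces $t_0$ into both terms of the minimum, and it also makes the second term trivial when $t>n-m$, so that only the first estimate is effective there.

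For the first term of the minimum I would pass to the ideal $U_{m,t_0}$ introduced before Proposition \ref{p41}. Following the computation in the proof of \cite[Lemma 2.4]{lucrare1}, one further colon by a suitable saturating monomial $w$ identifies $I_{n,m}^{t_0}:w$ with the extension $U_{m,t_0}\cdot S$, where $U_{m,t_0}\subset S_{t_0+m}=K[x_1,\ldots,x_{m+t_0}]$ and the remaining $n-m-t_0$ variables occur freely. Since Stanley depth is additive under adjoining free variables,
$$\sdepth(U_{m,t_0}\cdot S)=\sdepth_{S_{t_0+m}}(U_{m,t_0})+(n-m-t_0),$$
and Proposition \ref{p41} bounds the first summand by $m+\lfloor t_0/2\rfloor$. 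Combining these gives
$$\sdepth(I_{n,m}^t)\leq m+\left\lfloor \frac{t_0}{2}\right\rfloor+(n-m-t_0)=n-\left\lceil \frac{t_0}{2}\right\rceil,$$
which is the first term of the minimum.

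For the second term I would instead choose a monomial $u'$ that keeps the whole path in view but splits it according to the natural period $m+1$, so that $I_{n,m}^{t_0}:u'$ becomes (an extension of) an intersection of monomial primes supported on pairwise disjoint sets of variables, of the type treated in Section $3$. The point is that each block of $m+1$ consecutive vertices contributes exactly one separated component, so the number of components is $\lfloor (n-t_0+1)/(m+1)\rfloor$; estimating the Stanley depth of this colon ideal through Theorem \ref{t31} together with $\sdepth\leq\hdepth$ from Proposition \ref{p1} should then yield the bound $n-\lfloor (n-t_0+1)/(m+1)\rfloor+1$. Taking the smaller of the two displayed estimates finishes the proof.

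The hard part will be the exact identification of the two colon ideals. Matching $I_{n,m}^{t_0}:w$ with $U_{m,t_0}\cdot S$, and especially producing the disjoint-prime structure needed for the second term, requires pinning down the correct saturating monomials $w$ and $u'$ and then verifying the resulting equalities of monomial ideals degree by degree; the exponent $t_0$ makes the bookkeeping of \emph{which} products of generators divide a given monomial genuinely delicate, since overlaps among consecutive windows interact with the repeated factors coming from the power. Once these colon computations are secured, the two estimates follow mechanically from Proposition \ref{p41} and Theorem \ref{t31}, and the theorem drops out by taking the minimum.
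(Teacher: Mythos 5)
Your overall strategy --- reduce to $t\le n-m$ by a colon that stabilizes the power, then bound $\sdepth(I_{n,m}^{t_0})$ by coloning down to ideals handled by Proposition \ref{p41} and Section $3$ --- is the paper's strategy, and your derivation of the first term, $n-\lceil t_0/2\rceil = m+\lfloor t_0/2\rfloor + (n-m-t_0)$ via \cite[Lemma 3.6]{hvz}, is exactly right. But the proof has a genuine gap where you yourself flag ``the hard part'': the identification of the colon ideals is the entire content of the argument, and you neither produce the saturating monomials nor establish the structure of the resulting ideals. The paper does this by citing \cite[Lemma 2.5]{lucrare1}: writing $n-t+1=q(m+1)+r$, there is a \emph{single} monomial $w$ with $(I_{n,m}^t:w)=U_{m,t}+P_{m,t,q}$ (plus possibly one extra generator when $r=m$), a \emph{sum} of two ideals supported on disjoint sets of variables, where $P_{m,t,q}$ is a single monomial prime of height $2(q-1)$. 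Both terms of the minimum then fall out of this one colon ideal simultaneously, via \cite[Theorem 1.3]{mirci} ($\sdepth$ of such a sum is at most the minimum of the $\sdepth$'s of the summands): the $U_{m,t}$ summand gives $n-\lceil t/2\rceil$ by Proposition \ref{p41}, and the prime $P_{m,t,q}$ gives $\lceil 2(q-1)/2\rceil+(n-2(q-1))=n-q+1$ by \cite[Theorem 2.2]{biro}.

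Your route to the second term is the part that would fail as stated. You posit a second monomial $u'$ whose colon is an \emph{intersection} of $\lfloor(n-t_0+1)/(m+1)\rfloor$ monomial primes with pairwise disjoint supports, to be fed into Theorem \ref{t31}. No such colon-ideal computation is available (the relevant lemma produces a sum containing a single prime of height $2(q-1)$, not an intersection of $q$ primes), and even granting the structure, Theorem \ref{t31} would give a bound of the form $\lfloor(N'+q)/2\rfloor+(n-N')$, which only recovers $n-q+1$ under additional assumptions on the component sizes $N'$ that you have not pinned down. Similarly, your claim that $(I_{n,m}^{t_0}:w)$ equals $U_{m,t_0}\cdot S$ with all remaining variables free is not what the colon actually is; the upper bound survives only because the true colon is a sum of $U_{m,t_0}S$ with an ideal in disjoint variables. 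So the proposal is a correct skeleton for the first bound but rests on unproved, and in part incorrect, colon-ideal identifications for the second.
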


\begin{proof}

If $t\geq n-m+1$, then $t_0=n-m$ and, according to \cite[Lemma 2.1]{lucrare1}, we have 
$$I_{n,m}^{t_0}=I_{n,m}^t:(x_{n-m+1}\cdots x_n)^{t-t_0}.$$
Therefore, from Lemma \ref{lima}(2) it follows that $$\sdepth(I_{n,m}^t)\leq \sdepth(I_{n,m}^{t_0}).$$
Hence, we can assume that $t\leq n-m$ and $t_0=t$.

By Euclidean division, we write $n-t+1=q(m+1)+r$, where $0\leq r\leq m$. According to
\cite[Lemma 2.5]{lucrare1}, there exists a monomial $w\in S$ such that:
\begin{equation}\label{ciuciu}
(I_{n,m}^t:w)=\begin{cases} U_{m,t}+P_{m,t,q},& r<m \\ U_{m,t}+P_{m,t,q}+(x_{n-m+1}\cdots x_n),& r=m \end{cases},
\end{equation}
where $P_{m,t,q}\subset K[x_{t+m+1},\ldots,x_{t+q(m+1)-1}]$ is a prime monomial ideal of height $2(q-1)$. 

If $r<m$ then, from \eqref{ciuciu} 
and \cite[Theorem 1.3]{mirci} it follows that 
$$\sdepth(I_{n,m}^t:w)\leq \min\{ \sdepth(U_{m,t}S),\sdepth(P_{m,t,q}S) \}.$$
Using Proposition \ref{p41}, \cite[Lemma 3.6]{hvz} and \cite[Theorem 2.2]{biro} we deduce that
$$\sdepth(I_{n,m}^t:w)\leq \min\{ n-\left\lceil \frac{t}{2} \right\rceil, n-q+1\}$$
In the case $r=m$, we obtain the same inequality. Therefore, the required conclusion follows
from Lemma \ref{lima}(2) and the fact that $q=\left\lfloor \frac{n-t+1}{m+1} \right\rfloor$.
\end{proof}

\subsection*{The $m$-path ideal of a cycle graph}

Let $n > m\geq 2$ be two integer and
$$J_{n,m}=I_{n,m}+(x_{n-m+2}\cdots x_nx_1,\ldots,x_nx_1\cdots x_{m-1})\subset S=K[x_1,\ldots,x_n],$$
the $m$-path ideal associated to the cycle graph of length $n$.

Let $d:=\gcd(n,m)$. We consider the ideal
\begin{equation}
U'_{n,d}=(x_1,x_{d+1},\cdots,x_{d(r-1)+1})\cap (x_2,x_{d+2},\cdots,x_{d(r-1)+2})\cap \cdots \cap (x_d,x_{2d},\ldots,x_{rd}),
\end{equation}
where $r:=\frac{n}{d}$. Note that $U'_{n,1}=\mathbf m=(x_1,\ldots,x_n)$.

\begin{prop}\label{p42}
With the above notations, we have that:
$$\sdepth(U'_{n,d})\leq \qdepth(U'_{n,d})\leq \left\lfloor \frac{n+d}{2} \right\rfloor.$$
\end{prop}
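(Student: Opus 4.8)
The plan is to recognize $U'_{n,d}$ as a particular instance of the ideal $I_{n_1,\ldots,n_r}$ from Section 3 and then to invoke Theorem \ref{t31} together with Proposition \ref{p1}. Recall that $r=n/d$ here; to avoid a clash with the notation of Section 3 (where $r$ denotes the number of prime factors), I would temporarily think of $U'_{n,d}$ as an intersection of $d$ monomial prime ideals. Concretely, the $j$-th prime factor is generated by the $r$ variables $x_j,x_{j+d},\ldots,x_{j+(r-1)d}$, i.e.\ by those variables whose index is congruent to $j$ modulo $d$. These $d$ sets of variables are pairwise disjoint and together exhaust all $n=dr$ variables.

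First I would note that, after a permutation of the variables collecting each residue class into a consecutive block, $U'_{n,d}$ becomes the standard model $I_{\underbrace{r,\ldots,r}_{d}}$ of Section 3: an intersection of $d$ prime ideals, each on $r$ pairwise disjoint variables, with $N=dr=n$ in total. Such a permutation is a $K$-algebra automorphism of $S$ carrying $U'_{n,d}$ to this model, so it preserves the Hilbert series and hence leaves both $\sdepth$ and $\qdepth$ unchanged; thus it suffices to bound the standard model.

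Next I would apply Theorem \ref{t31} with the number of prime factors equal to $d$ and $N=n$, obtaining at once
$$\qdepth(U'_{n,d})\leq \left\lfloor \frac{N+d}{2} \right\rfloor = \left\lfloor \frac{n+d}{2} \right\rfloor.$$
The remaining inequality $\sdepth(U'_{n,d})\leq \qdepth(U'_{n,d})$ is immediate from Proposition \ref{p1}, as $U'_{n,d}$ is a square-free monomial ideal.

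There is essentially no serious obstacle: the argument is a direct specialization of Theorem \ref{t31}. The only point demanding a word of care is the invariance of $\sdepth$ and $\qdepth$ under the relabeling that turns the interleaved residue classes into consecutive blocks of variables, so that Theorem \ref{t31} — stated for primes supported on consecutive blocks — applies verbatim; this is the standard invariance under the $S$-automorphisms induced by permutations of the variables.
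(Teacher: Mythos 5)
Your proof is correct and follows exactly the paper's own argument: the paper likewise obtains the upper bound by applying Theorem \ref{t31} to $U'_{n,d}$ viewed as an intersection of $d$ prime ideals on disjoint sets of $r=n/d$ variables (so $N=n$), and then invokes Proposition \ref{p1}. Your explicit remark about the permutation of variables turning the residue classes into consecutive blocks is a point the paper leaves implicit, but it is the same proof.
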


\begin{proof}
According to Theorem \ref{t31}, we have that
$$\qdepth(U'_{n,d})\leq \left\lfloor \frac{n+d}{2} \right\rfloor.$$
Now, apply Proposition \ref{p1}.
\end{proof}



Let $t_0:=t_0(n,m)$ be the maximal integer such that $t_0\leq n-1$ and there exists a positive
integer $\alpha$ such that 
$$mt_0 = \alpha n + d.$$
Let $t\geq t_0$ be an integer.

\begin{teor}\label{t42}
Let $n > m\geq 2$ and $t\geq t_0$. We have that $$\sdepth(J_{n,m}^t)\leq \left\lfloor \frac{n+d}{2} \right\rfloor.$$
\end{teor}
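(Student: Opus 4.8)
The plan is to reduce the statement about $\sdepth(J_{n,m}^t)$ to the computation in Proposition \ref{p42} by exhibiting a colon ideal of $J_{n,m}^t$ that is isomorphic (up to a polynomial extension) to $U'_{n,d}$, and then invoking Lemma \ref{lima}(2), which guarantees that passing to a colon ideal never decreases the Stanley depth. Concretely, I would search for a monomial $w\in S$, not lying in $J_{n,m}^t$, such that $(J_{n,m}^t:w)$ equals $U'_{n,d}$ possibly extended to the full ring $S$ (i.e.\ $U'_{n,d}S$, where $U'_{n,d}$ is regarded as an ideal in the subring on its own support). Once this is achieved, Lemma \ref{lima}(2) gives $\sdepth(J_{n,m}^t)\le\sdepth(J_{n,m}^t:w)=\sdepth(U'_{n,d}S)$, and then Proposition \ref{p42} together with \cite[Lemma 3.6]{hvz} (to account for the extra variables contributing $1$ each to the Stanley depth when passing from $U'_{n,d}$ to $U'_{n,d}S$) finishes the bound at $\lfloor\frac{n+d}{2}\rfloor$.

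The first step is to understand the cyclic symmetry of $J_{n,m}$ and why the divisibility condition $mt_0=\alpha n+d$ is the right one. The ideal $J_{n,m}$ is invariant under the cyclic shift $x_i\mapsto x_{i+1}$ (indices mod $n$), and its generators are the products of $m$ consecutive variables around the cycle. Raising to the power $t$ and forming colon ideals, the relevant monomial $w$ should be a high power of a product of consecutive variables, chosen so that dividing the generators of $J_{n,m}^t$ by $w$ collapses the path structure into the ``arithmetic progression'' structure visible in $U'_{n,d}$: the intersectands $(x_j,x_{d+j},\ldots,x_{d(r-1)+j})$ group variables by residue class modulo $d=\gcd(n,m)$. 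The condition $mt_0\equiv d\pmod n$ is precisely what makes the exponent bookkeeping in $(J_{n,m}^t:w)$ work out so that each residue class mod $d$ survives as one prime component. I would model this computation on the analogous \cite[Lemma 2.5]{lucrare1} used in the proof of Theorem \ref{t41}, adapting it from the path to the cycle.

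The main obstacle will be the explicit determination of $w$ and the verification that $(J_{n,m}^t:w)=U'_{n,d}S$ on the nose, rather than some larger or smaller ideal. The cyclic (as opposed to linear) indexing means there is no natural ``boundary'' to anchor the Euclidean-division argument that worked cleanly for the path graph, so the residue-class combinatorics modulo $d$ must be carried out globally around the cycle; this is where the hypothesis $t\ge t_0$ is consumed, ensuring the exponent of $w$ is large enough that every generator of $J_{n,m}^t$ is divisible by the appropriate power and reduces correctly. Once the colon identity is established, the remainder is formal: apply $\sdepth(J_{n,m}^t)\le\sdepth(J_{n,m}^t:w)$ from Lemma \ref{lima}(2), identify the colon with $U'_{n,d}S$, and bound its Stanley depth using Proposition \ref{p42}, taking care that the polynomial extension by the remaining variables does not push the value above $\lfloor\frac{n+d}{2}\rfloor$.
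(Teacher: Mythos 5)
Your proposal follows essentially the same route as the paper: the paper's proof consists precisely of citing \cite[Lemma 2.2]{lucrare3} for the existence of a monomial $w_t$ with $(J_{n,m}^t:w_t)=U'_{n,d}$ and then applying Lemma \ref{lima}(2) and Proposition \ref{p42}, which is exactly the reduction you describe. One small simplification: $U'_{n,d}$ already involves all $n$ variables of $S$ (since $rd=n$), so no polynomial-extension bookkeeping via \cite[Lemma 3.6]{hvz} is needed here.
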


\begin{proof}
 According to \cite[Lemma 2.2]{lucrare3}, there exists a monomial $w_t\in S$ such that
    $$(J_{n,m}^t:w_t)=U'_{n,d}.$$
		The conclusion follows from Lemma \ref{lima}(2) and Proposition \ref{p42}.
\end{proof}

\subsection*{Aknowledgments} 

We gratefully acknowledge the use of the computer algebra system Cocoa (\cite{cocoa}) for our experiments.

The second author, Mircea Cimpoea\c s, was supported by a grant of the Ministry of Research, Innovation and Digitization, CNCS - UEFISCDI, 
project number PN-III-P1-1.1-TE-2021-1633, within PNCDI III.





\end{document}